\newtheorem{theorem}{Theorem}[section]
\newtheorem{lemma}[theorem]{Lemma}
\newtheorem{corollary}[theorem]{Corollary}
\newtheorem{definition}[theorem]{Definition}
\newtheorem{proposition}[theorem]{Proposition}
\numberwithin{equation}{section}
\DeclareMathOperator*{\osc}{osc}
\newcommand{ \mr }{ \mathbb{R} }
\newcommand{ \ba }{ \mathbf{a} }
\newcommand{ \m }{ \mathcal{M} }
\def\Xint#1{\mathchoice
    {\XXint\displaystyle\textstyle{#1}}%
     {\XXint\textstyle\scriptstyle{#1}}%
     {\XXint\scriptstyle\scriptscriptstyle{#1}}%
     {\XXint\scriptstyle\scriptscriptstyle{#1}}%
    \!\int}
\def\XXint#1#2#3{{\setbox0=\hbox{$#1{#2#3}{\int}$}
    \vcenter{\hbox{$#2#3$}}\kern-.5\wd0}}
\begin{document}

\title[Global Sobolev regularity]{Global Sobolev regularity for general elliptic equations of $p$-Laplacian type}

\author[S.-S. Byun, D.K. Palagachev, P. Shin]{Sun-Sig Byun\and Dian K. Palagachev\and Pilsoo Shin}

\address{Sun-Sig Byun: Seoul National University, Department of Mathematical Sciences and Research Institute of Mathematics, Seoul 151-747, Korea}
\email{byun@snu.ac.kr}

\address{Dian K. Palagachev: Politecnico di Bari,
Dipartimento di Meccanica, Matematica e Management,
Via Edoardo Orabona 4, 70125 Bari, Italy}
\email{dian.palagachev@poliba.it}

\address{Pilsoo Shin: Seoul National University, Department of Mathematical Sciences, Seoul 151-747, Korea}
\email{shinpilsoo@snu.ac.kr}

\keywords{Quasilinear elliptic operator; $p$-Laplacian; Weak solution; Gradient estimates; Small-BMO; Reifenberg flat domain}

\subjclass[2010]{Primary 35J60, 35B65; Secondary 35R05, 35B45, 35J92, 46E30}

\begin{abstract}
We derive global gradient estimates for  $W^{1,p}_0(\Omega)$-weak solutions to quasilinear elliptic equations of the form
$$
\mathrm{div\,}\ba(x,u,Du)=\mathrm{div\,}(|F|^{p-2}F)
$$
over $n$-dimensional Reifenberg flat domains.
The nonlinear term of the elliptic differential operator is supposed to be small-BMO with respect to $x$ and H\"older continuous in $u.$ In the case when $p\geq n,$ we allow only continuous nonlinearity in $u.$

Our result highly improves the known regularity results available in the literature. In fact, we are able not only to weaken the regularity requirement on the nonlinearity in $u$ from Lipschitz continuity to H\"older one, but we also find a very lower level of geometric assumptions on the boundary of the domain to ensure global character of the obtained gradient estimates.
\end{abstract}

\maketitle

\section{Introduction}

Solutions to important real world problems from science and technology turn out to realize minimal energy of suitable nonlinear functionals. Finding these solutions and examining closely their qualitative properties is a central problem of the Calculus of Variations, and the machinery of the nonlinear functional analysis is what serves to pursue that study. On the other hand, each minimizer of a variational functional solves weakly the corresponding Euler--Lagrange equation and this fact allows to rely on the powerful theory of PDEs as additional tool in the Calculus of Variations. The Euler--Lagrange equations are divergence form PDEs of elliptic type, usually nonlinear, and their weak solutions (the minimizers) own some basic minimal smoothness. The \textit{regularity theory} of general (non necessary variational) divergence form elliptic PDEs establishes how the smoothness of the data of a given  problem influences the regularity of
the solution, obtained under very general circumstances. Once having better smoothness, powerful tools of
functional analysis apply to infer finer properties of the solution and the problem itself. The importance of these issues is even more evident in the settings of variational problems if dealing with discontinuous functionals over domains with
non-smooth boundaries when many of the classical nonlinear analysis techniques fail.

Starting with the deep results of Caffarelli and Peral (\cite{CP}), a notable progress has been achieved in the last two decades in the regularity theory of nonlinear divergence form elliptic PDEs (see also \cite{AM, BDM, BK, BW1, CM, KM, MP} and the references therein). On the base of suitable  $L^p$-estimates for the gradient $Du$ of the weak solution a satisfactory Calder\'on--Zygmund type theory has been developed, firstly for equations with principal term depending only on $Du,$ and later also dependence on the independent variables $x$ has been allowed. Moreover, the minimal regularity requirements have been identified for the nonlinear terms of the equations and the boundary of the underlying domain in order the Calder\'on--Zygmund theory still holds true for large class of equations with generally $x$-discontinuous ingredients. In all that context, the possibility to deal with equations with general nonlinearity with respect to the solution $u$ is a rather delicate matter,     and the reason of this lies in the fact that such equations are not invariant under particular scaling and normalization, whereas these are crucial ingredients of the perturbation approach in \cite{CP}.

\medskip

We deal here with the Dirichlet problem
\begin{equation}\label{1}
\begin{cases}
\mathrm{div\,}\ba(x,u,Du)=\mathrm{div\,}(|F|^{p-2}F) & \textrm{in}\ \Omega\\
u=0 & \textrm{on}\ \partial\Omega,
\end{cases}
\end{equation}
where $\Omega\subset\mr^n,$ $n\geq2,$ is a bounded and generally irregular domain,
$\ba\colon\mr^n\times\mr\times\mr^n\to\mr^n$ is a Carath\'eodory map,
$p>1$ is arbitrary exponent and $F\in L^p(\Omega,\mr^n).$

Our main goal is to obtain a Calder\'on--Zygmund type regularizing effect for \eqref{1}. Namely, assuming $F\in L^{p'}(\Omega,\mr^n)$ for $p'>p,$
 under rather general structure and regularity hypotheses on $\ba(x,z,\xi)$ and $\partial\Omega,$ we derive global $L^{p'}(\Omega)$-gradient estimate for any bounded $W^{1,p}_0(\Omega)$ weak solution of \eqref{1} in terms of $\|F\|_{L^{p'}(\Omega,\mr^n)}$ showing this way that $F\in L^{p'}(\Omega,\mr^n)$ implies $Du\in L^{p'}(\Omega,\mr^n).$

In the case when $\ba=\ba(x,Du),$ similar results have been obtained in \cite{BW2} in the settings of classical Lebesgue spaces and in \cite{BR} for weighted Lebesgue spaces, assuming the standard ellipticity condition and allowing discontinuity of $\ba$ with respect to $x,$ measured in terms of small-BMO seminorm. In the recent paper \cite{NP}, the authors succeeded to obtain \textit{interior} gradient estimates for  \eqref{1} also in the case when $\ba$ depends on the solution $u.$ The problems arising with the scaling and normalization in that situation are cleverly avoided by including the nonlinear differential operator into a \textit{two parameter} class of elliptic operators, that turns out to be invariant with respect to dilations and rescaling of domain. In order to run the approximation procedure of \cite{CP}, a \textit{uniform} control with respect to these two parameters is necessary, and the authors of \cite{NP} carry out it by means of a delicate compactness argument relying on the Minty trick. This approach, however, strongly requires \textit{uniqueness} for the approximating equation, that is why, $\ba(x,z,\xi)$ is assumed to be \textit{Lipschitz continuous with respect to $z$} in \cite{NP}.

Here we suppose that $\ba(x,z,\xi)$ is small-BMO function with respect to $x$ and it satisfies the standard uniform ellipticity condition in $\xi$ but, in contrast to \cite{NP}, $\ba$ is assumed to be \textit{only H\"older continuous} with respect to the variable $z.$ To get our main result, we combine the two-parameter approach from \cite{NP} with correct scaling arguments in the $L^q$-estimates for the maximal function of the gradient and Vitali type covering lemma. However, we rely here on the \textit{higher gradient integrability} in the spirit of Gehring--Giaquinta rather than on the uniqueness of the approximating equation, and this allows us to weaken the $z$-Lipschitz continuity of $\ba$ to only H\"older one.

We start with considering two appropriate reference problems with \textit{only gradient nonlinear terms}, given by the $z$-compositions of $\ba(x, z, \xi)$  first with the weak solution $u(x)$ and then with its local average $\bar{u}.$ Thanks to the uniform ellipticity of the associated
nonlinearities, the reference solutions support higher integrability results and H\"{o}lder continuity properties. We then combine these
properties with the $z$-H\"older continuity of $\ba$ and the comparison estimates of \cite{BR}, regarding nonlinear terms like $\ba(x,Du),$ in order to obtain the desired comparison estimates. Once having these, standard maximal
function approach and a Vitali type covering lemma give the main result.

It is worth noting that we need $\ba(x,z,\xi)$ to be
H\"older continuous in $z$ \textit{only in the case when $p<n.$} Otherwise, the weak solution of \eqref{1} is itself a H\"older continuous which implies that the nonlinear term in \eqref{1}, fixed at the solution $u(x),$ that is $\mathbf{A}(x,\xi):= \ba (x,u(x),\xi),$ is a small-BMO with respect to $x$ if $\ba (x,z,\xi)$ is required to be merely \textit{continuous} in $z.$ This suffices to run our procedure and to get the Calder\'on--Zygmund property assuming only $z$-\textit{continuity} of $\ba$ when $p\geq n.$

Another advantage of the approach here adopted is that it works also near the boundary of $\Omega$ and this allows to obtain \textit{global} gradient estimates for the solutions of \eqref{1}. Indeed, this requires some ``good'' geometric properties of $\partial\Omega$ and these are ensured when $\Omega$ belongs to the class of the Reifenberg flat domains.

The paper is organized as follows. In Section~\ref{Sec2} we list the hypotheses imposed on the data and state the main result, Theorem~\ref{Thm1}. Some comments about the structure and regularity assumptions required are given as well. Section~\ref{Sec3} provides an analysis of how the equation in \eqref{1} and the hypotheses on the nonlinear term behave under the two-parameter scaling and normalization.
Section~\ref{Sec4} forms the analytic heart of the paper. We derive there good gradient estimates for solutions to appropriate limiting problems to which \eqref{1} compares. With these estimates at hand, we
employ in Section~\ref{Sec5} a Vitali type covering lemma and scaling arguments in order to prove Theorem~\ref{Thm1} by obtaining suitable decay estimates for the level sets of the Hardy--Littlewood maximal function of the gradient. The last Section~\ref{Sec6} is devoted to the refinement of the main result in the case when $p\geq n.$ The H\"older continuity of $\ba(x,z,\xi)$ with respect to $z$ is relaxed to only continuity and we combine our recent results \cite{BPS,BPS-arxiv} with these of \cite{BR} to get the refined  version of Theorem~\ref{Thm1} when $p\geq n.$

\bigskip

\noindent
{\bf Acknowledgements.}

S.-S.~Byun was supported by the National Research Foundation of Korea  grant funded by the Korea government (MEST)  (NRF-2015R1A4A1041675).
 D.K.~Palagachev is member of the Gruppo Nazionale per l'Analisi Matematica, la Probabilit\`a e le loro Applicazioni (GNAMPA) of the Istituto Nazionale di Alta Matematica (INdAM).
P. Shin was supported by National Research Foundation of Korea grant funded by the Korean government (MEST) (NRF-2015R1A2A1A15053024).

\section{Hypotheses and main results}\label{Sec2}

Throughout the paper, we will use standard notations and will assume that the functions and sets considered are measurable.

We denote by $B_\rho(x)$ (or simply $B_\rho$ if there is no ambiguity) the $n$-dimensional open ball with center $x\in\mr^n$ and radius $\rho,$
and $\Omega_\rho(x):=\Omega\cap B_\rho(x)$
 for an open set $\Omega\subset \mr^n.$
 The Lebesgue measure of a measurable set $A\subset\mr^n$ will be denoted
by $|A|$ while,  for any integrable function $u$ defined on $A,$
$$
\overline{u}_A:=\Xint-_A u(x)\; dx = \frac{1}{|A|}\int_A u(x)\; dx
$$
stands for its integral average. If
 $u\in L^1_{\text{loc}}(\mr^n),$ then the Hardy--Littlewood maximal function of $u$ is given by
$$
\m u(x) : = \sup_{\rho>0} \Xint-_{B_\rho(x)} |u(y)|\; dy,
$$
while $\m_A u:=\m\left(\chi_A u\right)$ when
  $u$ is defined on a measurable set $A,$ with the characteristic function  $\chi_A$ of the set $A.$

We will denote by $C^\infty_0(\Omega)$ the space of infinitely differentiable functions over a bounded domain $\Omega\subset\mr^n$ with compact support contained in $\Omega,$ and $L^p(\Omega)$ stands for the standard Lebesgue space with a given $p\in[1,\infty].$ The Sobolev space $W^{1,p}_0(\Omega)$ is defined, as usual, by the completion of $C^\infty_0(\Omega)$ with respect to the norm
$$
\|u\|_{W^{1,p}(\Omega)}:=\|u\|_{L^p(\Omega)}+\|Du\|_{L^p(\Omega)}
$$
for $p\in [1,\infty).$

In what follows we will consider a bounded domain $\Omega\subset\mr^n$ with $n\geq2,$ the boundary $\partial\Omega$ of which is \textit{Reifenberg flat} in the sense of the following definition.

\begin{definition}\label{defin2.1}
The domain $\Omega$ is said to be $(\delta,R)$-Reifenberg flat if there exist positive constants $\delta$ and $R$ with the property that for each $x_0\in\partial\Omega$ and each $\rho\in(0,R)$ there is a local coordinate system $\{x_1,\cdots,x_n\}$ with origin at the point $x_0,$ and such that
$$
B_\rho(x_0)\cap\{x:x_n>\rho\delta\}\subset B_\rho(x_0)\cap\Omega \subset B_\rho(x_0)\cap\{x:x_n>-\rho\delta\}.
$$
\end{definition}

Turning back to  problem \eqref{1}, the nonlinear term is given by the Carath\'eodory map $\ba\colon\Omega\times\mr\times\mr^n\to\mr^n$ where
$\ba(x,z,\xi)=\big(a^1(x,z,\xi),\cdots,a^n(x,z,\xi)\big).$ We suppose moreover that
$\ba(x,z,\xi)$ is differentiable with respect to $\xi,$ and $D_\xi\ba$ is a Carath\'eodory map.

Throughout the paper the following structure and regularity conditions on the data will be assumed:

\medskip

$\bullet$ \textit{Uniform ellipticity:} There exists a constant $\gamma>0$ such that
\begin{equation}\label{3}
\begin{cases}
\gamma|\xi|^{p-2}|\eta|^2\leq\langle D_\xi\ba(x,z,\xi)\eta,\eta\rangle,\\
|\ba(x,z,\xi)|+|\xi||D_\xi\ba(x,z,\xi)|\leq\gamma^{-1}|\xi|^{p-1}
\end{cases}
\end{equation}
for a.a. $x\in\Omega$ and $\forall(z,\xi)\in\mr\times\mr^n,$ $\forall\eta\in\mr^n.$

It is worth noting that the uniform ellipticity condition \eqref{3} implies easily  the following monotonicity property:
\begin{equation}\label{Mo1}
\big\langle \ba(x,z,\xi_1)-\ba(x,z,\xi_2),\xi_1-\xi_2 \big\rangle \geq
\begin{cases}
\widetilde{\gamma}|\xi_1-\xi_2|^p & \textrm{if}\ p\geq 2,\\
\widetilde{\gamma}|\xi_1-\xi_2|^2(|\xi_1|+|\xi_2|)^{p-2} & \textrm{if}\ 1<p<2,
\end{cases}
\end{equation}
where $\widetilde{\gamma}$ depends only on $\gamma,$ $n$ and $p.$

\medskip

$\bullet$ \textit{H$\ddot{o}$lder continuity:} There exist constants $\Gamma>0$ and $0<\alpha<1$ such that
\begin{equation}\label{4}
|\ba(x,z_1,\xi)-\ba(x,z_2,\xi)|\leq \Gamma|z_1-z_2|^\alpha|\xi|^{p-1}
\end{equation}
for a.a. $x\in\Omega,$ $\forall z_1,z_2\in\mr$ and $\forall\xi\in\mr^n.$

\medskip

$\bullet$ \textit{$(\delta,R)$-vanishing property:} For each constant $M>0$ there exist $R>0$ and $\delta>0,$ depending on $M,$ such that
\begin{equation}\label{5}
\sup_{z\in[-M,M]}
\sup_{0<\rho\leq R}\sup_{y\in\mr^n}\Xint-_{B_\rho(y)}\Theta\big(\ba;B_\rho(y)\big)(x,z)dx \leq \delta,
\end{equation}
where the function $\Theta$ is defined by
$$
\Theta\big(\ba;B_\rho(y)\big)(x,z):=\sup_{\xi\in\mr^n\setminus\{0\}}\frac{|\ba(x,z,\xi)-\overline{\ba}_{B_\rho(y)}(z,\xi)|}{|\xi|^{p-1}},
$$
and $\overline{\ba}_{B_\rho(y)}(z,\xi)$ is the integral average of $\ba(x,z,\xi)$ in the variables $x$ for a fixed couple $(z,\xi)\in\mr\times\mr^n,$ that is,
$$\overline{\ba}_{B_\rho(y)}(z,\xi)=\Xint-_{B_\rho(y)}\ba(x,z,\xi)\ dx.
$$

\medskip

To make clear the meaning of the above assumptions, we should note that, thanks to the scaling invariance property of $\partial\Omega,$   $R$ could be any number greater than $1$ in Definition~\ref{defin2.1}, while $R$ could be taken equal to $\text{diam\,}\Omega$ in \eqref{5}. For what concerns
 $\delta$ instead, the definitions of $(\delta,R)$-Reifenberg flatness and $(\delta,R)$-vanishing property are significant only for small values, say $\delta\in(0,1/8).$ Roughly speaking, the Reifenberg flatness of $\Omega$ means that $\partial\Omega$ is well approximated by hyperplanes at every point and at every scale. In particular, domains  with $C^1$-smooth boundary or with boundary that is locally given as graph
of a Lipschitz continuous function with small Lipschitz constant are Reifenberg flat.
Actually, the class of the Reifenberg flat domains is much wider and contains sets with rough fractal boundaries such as the von Koch snowflake that is a Reifenberg flat when the angle of the spike with respect to the horizontal is small enough. As for the $(\delta,R)$-vanishing property \eqref{5}, it exhibits a sort of smallness in terms of BMO for what concerns the behaviour of $\ba(x,z,\xi)$ with respect to the $x$-variables. For instance, \eqref{5} is satisfied when $\ba\in C^0_x$ or even VMO${}_x.$ This way, \eqref{5} allows $x$-\textit{discontinuity} of the nonlinearity which is controlled in terms of small-BMO.

\medskip

Turning bach to the Dirichlet problem \eqref{1}, recall that a function $u\in W^{1,p}_0(\Omega)$ is said to be a  \textit{weak solution}  if
$$
\int_\Omega \big\langle \ba(x,u(x),Du(x)), D\phi(x)\big\rangle\; dx = \int_\Omega \big\langle|F(x)|^{p-2}F(x), D\phi(x)\big\rangle\; dx
$$
for each test function $\phi\in W^{1,p}_0(\Omega).$

\medskip

Our main result is as follows.
\begin{theorem}\label{Thm1}
Suppose \eqref{3} and \eqref{4}, and let $u\in W^{1,p}_0(\Omega)\cap L^\infty(\Omega)$ be a bounded weak solution of \eqref{1}. Assume that  $\|u\|_{L^\infty(\Omega)}\leq M$ and $|F|^p\in L^q(\Omega)$ for some $q\in (1,\infty).$ Then there is a small constant $\delta=\delta(\gamma,\alpha,n,p,q,\Gamma,M)$ such that if $\ba$ is $(\delta,R)$-vanishing and $\Omega$ is $(\delta,R)$-Reifenberg flat, then $|Du|^p\in L^q(\Omega)$ with the estimate
$$
\int_\Omega |Du|^{pq} dx \leq C \int_\Omega |F|^{pq} dx,
$$
where $C>0$ depends only on $\gamma,$ $\alpha,$ $n,$ $p,$ $q,$ $\Gamma,$ $M$ and $|\Omega|.$
\end{theorem}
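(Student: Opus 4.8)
The plan is to follow the Caffarelli--Peral perturbation philosophy, but with the two-parameter class of operators from \cite{NP} providing the scale invariance, and with Gehring-type higher integrability replacing the uniqueness arguments of \cite{NP}. Fix $M\geq\|u\|_{L^\infty(\Omega)}$. The key reduction is to pass from the equation with the full nonlinearity $\ba(x,u(x),Du)$ to equations with only gradient dependence. Introduce the frozen operator $\mathbf A(x,\xi):=\ba(x,u(x),\xi)$ and, on each ball $B_\rho$, the averaged-over-$u$ operator obtained by replacing $u(x)$ with its average $\bar u_{B_\rho}$, i.e. $\ba(x,\bar u_{B_\rho},\xi)$. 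Both inherit the uniform ellipticity \eqref{3} with the same $\gamma$, hence their weak solutions enjoy the interior and boundary higher-integrability estimates of Gehring--Giaquinta type and the interior/boundary $C^{0,\beta}$ estimates for the homogeneous problem. The comparison between the $\ba(x,\bar u_{B_\rho},\cdot)$-equation and a genuinely $x$-averaged reference problem is exactly the content of \cite{BR}, applicable because this operator no longer sees $u$.

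**Key steps in order.** First I would record, via Section~\ref{Sec3}, how the equation and hypotheses \eqref{3}--\eqref{5} transform under the two-parameter scaling $u\mapsto u/\lambda$, $F\mapsto F/\lambda$ combined with domain dilation, so that all the constants that appear below are genuinely uniform; this is what makes the later covering argument legitimate. Second, for a fixed ball, I would build the chain of comparison functions: (i) $u$ versus the weak solution $v$ of $\mathrm{div}\,\mathbf A(x,Dv)=0$ (or the nonhomogeneous version carrying $F$) with $v=u$ on the boundary of the relevant set — here the $L^p$ comparison is the classical monotonicity/Caccioppoli estimate controlled by $\Xint-|F|^p$; (ii) $\mathbf A(x,\xi)=\ba(x,u(x),\xi)$ versus $\ba(x,\bar u_{B_\rho},\xi)$ — this difference is bounded by $\Gamma|u(x)-\bar u_{B_\rho}|^\alpha|\xi|^{p-1}$ by \eqref{4}, and the oscillation $|u(x)-\bar u_{B_\rho}|$ is controlled either by the $C^{0,\alpha'}$ regularity of $v$ (transferred to $u$ via the higher-integrability/comparison, in the regime $p<n$ one uses Sobolev--Poincaré plus the gain from Gehring) so that the resulting error is a small power of $\rho$ times $\Xint-|Du|^p$; (iii) the $\ba(x,\bar u_{B_\rho},\cdot)$-solution versus its $x$-averaged counterpart, handled verbatim by the comparison estimates of \cite{BR}, contributing a factor controlled by $\delta$ through the $(\delta,R)$-vanishing condition \eqref{5}; (iv) finally comparison of the averaged reference solution with a solution of a limiting $p$-Laplacian-type equation on a half-ball (using Reifenberg flatness of $\partial\Omega$), whose gradient is bounded, again following \cite{BW2,BR}. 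Composing these yields, on each $B_\rho$, an estimate of the form
\begin{equation*}
\Xint-_{\Omega_\rho}|Du-(Dw)|^p\,dx \leq \varepsilon\,\Xint-_{\Omega_{2\rho}}|Du|^p\,dx + C_\varepsilon\,\Xint-_{\Omega_{2\rho}}|F|^p\,dx,
\end{equation*}
with $w$ a comparison function whose gradient is essentially bounded by the averages of $|Du|$ and $|F|$, and $\varepsilon$ made small by choosing $\delta$ small and the radius small relative to the Hölder modulus.

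**The maximal-function / covering argument.** With the comparison estimate in hand, I would set $g:=|Du|^p$ and run the standard level-set analysis: the good-$\lambda$ inequality $|\{\m_\Omega g>N\lambda\}\cap B|\leq \varepsilon_1|\{\m_\Omega g>\lambda\}\cap B|+|\{\m_\Omega(|F|^p)>c\lambda\}\cap B|$ is obtained by applying a Vitali-type covering lemma to the sublevel sets, exactly along the lines of Section~\ref{Sec5}, where scaling (from Section~\ref{Sec3}) is used to normalize each ball. Summing the dyadic contributions $\sum_k (N^k)^q |\{\m_\Omega g > N^k\}|$ and using the strong $(q,q)$ bound for $\m$ then gives $\m_\Omega g\in L^q$, hence $g=|Du|^p\in L^q$, with the displayed estimate; the dependence of $C$ on $|\Omega|$ enters only through normalizing constants in the covering lemma and the $L^\infty$-to-$L^p$ passage.

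**Main obstacle.** The delicate point — and the reason for the $p<n$ versus $p\geq n$ dichotomy — is controlling the error in step (ii), i.e. making the oscillation $|u(x)-\bar u_{B_\rho}|^\alpha$ small \emph{uniformly under the two-parameter scaling}. One cannot simply invoke Hölder continuity of $u$ because $u$ solves the original equation, not a frozen one; instead one must first transfer regularity from the frozen solution $v$ to $u$ via the $L^p$-comparison \emph{and} the Gehring self-improvement, and then check that the constants survive the normalization $u\mapsto u/\lambda$. This is where the ``higher gradient integrability in the spirit of Gehring--Giaquinta'' substitutes for the uniqueness-based compactness of \cite{NP}, and getting the bookkeeping of scaling-invariant constants right throughout the comparison chain is the real work; when $p\geq n$, $u$ is automatically Hölder by Morrey's embedding, $\mathbf A(x,\xi)=\ba(x,u(x),\xi)$ is directly small-BMO in $x$ under mere continuity of $\ba$ in $z$, and this entire difficulty evaporates, which is precisely the refinement carried out in Section~\ref{Sec6}.
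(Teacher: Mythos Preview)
Your proposal follows essentially the same architecture as the paper's proof: the same two-parameter scaling, the same chain of comparison solutions (your steps (i)--(iv) correspond to the paper's $u\to h\to f\to v$ in Section~\ref{Sec4}), and the same maximal-function/Vitali endgame. The ingredients you list --- Gehring higher integrability of $Dh$, H\"older/oscillation regularity of the frozen solution, and the \cite{BR} comparison for the $x$-averaged problem --- are exactly those used.

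There is, however, one point where your description misidentifies the mechanism, and it is precisely the crux. In step~(ii) you assert that the error is ``a small power of $\rho$ times $\Xint-|Du|^p$''. This is not how the smallness arises. Under the two-parameter scaling the H\"older factor becomes $\Gamma(\lambda r)^\alpha|u-\bar u|^\alpha$, and since the scaled solution satisfies $\|u\|_{L^\infty}\leq M/(\lambda r)$, the product $(\lambda r)|u-\bar u|$ is merely bounded by $2M$, not small; Poincar\'e on the normalized ball likewise yields only a bounded quantity, with no residual power of the radius. The paper resolves this by introducing an \emph{additional} large parameter $\sigma>6$: the first comparison function $h$ solves the frozen problem on $\Omega_\sigma$ while the oscillation is measured on $\Omega_5$, so the De~Giorgi--Nash--Moser oscillation estimate (Proposition~\ref{LemHOL}) gives $\osc_{\Omega_5}h\leq C(5/\sigma)^\beta\|h\|_{L^\infty(\Omega_\sigma)}\leq C(5/\sigma)^\beta M/(\lambda r)$, and after multiplying by $\lambda r$ one obtains smallness from $\sigma\to\infty$, not from $\rho\to0$. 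A dichotomy on whether $\lambda r$ is small or bounded below (the case split in the proof of Lemma~\ref{Lem3}) handles the two regimes. Gehring's higher integrability enters, as you say, but its role is to decouple the factor $|u-\bar u|^{\alpha p/(p-1)}$ from $|Dh|^p$ via H\"older's inequality, so that the oscillation term can be estimated in $L^p$ rather than $L^\infty$. Your ``Main obstacle'' paragraph correctly flags this step as the heart of the matter, but the resolution requires this extra scale parameter $\sigma$; without it the bookkeeping you allude to cannot close.
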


\section{Scaling and normalization properties}\label{Sec3}

In this section, we will show how the scaling and normalization reflect on the structure conditions and regularity assumptions imposed on the data.

Recall that that $\Omega$ is assumed to be a $(\delta,R)$-Reifenberg flat domain and the nonlinearity $\ba$ satisfies the conditions  \eqref{3}, \eqref{4} and the $(\delta,R)$-vanishing property \eqref{5}. Let  $\sigma$ be a large enough positive constant which is to be determined later in a universal way so that it will depend only on the given data such as $n,$ $p,$ $q,$ $\gamma,$ $\Gamma$ and $M.$  Then for each fixed $\lambda>0$ and $0<r\leq \frac{R}{\sigma},$ we define a bounded domain
$$
\widetilde{\Omega}=\left\{\frac{1}{r}x:x\in\Omega\right\},
$$
a Carath\'eodory map $\widetilde{\ba}\colon\mr^n\times\mr\times\mr^n\to\mr^n$ by
$$
\widetilde{\ba}(x,z,\xi)=\frac{\ba(r x,\lambda r z,\lambda\xi)}{\lambda^{p-1}},
$$
a Sobolev function $\widetilde{u}\in W^{1,p}_0(\widetilde{\Omega})$ and a measurable function $\widetilde{F}\in L^{p}(\widetilde{\Omega})$ by
$$
\widetilde{u}(x)=\frac{u(rx)}{\lambda r}\quad \mathrm{and}\quad \widetilde{F}(x)=\frac{F(r x)}{\lambda}.
$$

Straightforward calculations yield the following properties:
\begin{itemize}
\item[$\bullet$]
$\widetilde{\ba}$ satisfies the uniform ellipticity condition \eqref{4} with the same constant $\gamma.$ That is,
\begin{equation}\label{S}
\begin{cases}
\gamma|\xi|^{p-2}|\eta|^2\leq\langle D_\xi\widetilde{\ba}(x,z,\xi)\eta,\eta\rangle,\\
|\widetilde{\ba}(x,z,\xi)|+|\xi||D_\xi\widetilde{\ba}(x,z,\xi)|\leq\gamma^{-1}|\xi|^{p-1}
\end{cases}
\end{equation}
for a.a. $x\in\mr^n$ and $\forall(z,\xi)\in\mr\times\mr^n,$ $\forall\eta\in\mr^n.$

Moreover, the monotonicity
\begin{align}\label{Mo}
\langle \widetilde{\ba}(x,z,\xi_1)-&\widetilde{\ba}(x,z,\xi_2),\xi_1-\xi_2 \rangle\\
\nonumber
 &\geq
\begin{cases}
\widetilde{\gamma}|\xi_1-\xi_2|^p & \textrm{if}\ p\geq 2,\\
\widetilde{\gamma}|\xi_1-\xi_2|^2(|\xi_1|+|\xi_2|)^{p-2} & \textrm{if}\ 1<p<2,
\end{cases}
\end{align}
does follow with the same constant $\widetilde{\gamma}.$

\item[$\bullet$]
$\widetilde{\ba}$ satisfies
\begin{equation}\label{S1}
|\widetilde{\ba}(x,z_1,\xi)-\widetilde{\ba}(x,z_2,\xi)|\leq \Gamma(\lambda r)^\alpha|z_1-z_2|^\alpha|\xi|^{p-1}
\end{equation}
for a.a. $x\in\widetilde{\Omega},$ $\forall z_1,z_2\in\mr$ and $\forall\xi\in\mr^n$ with the same constants $\alpha$ and $\Gamma.$

\item[$\bullet$]
$\widetilde{\ba}$ is $(\delta,\frac{R}{r})$-vanishing. Namely,
$$
\sup_{z\in[-\frac{M}{\lambda r},\frac{M}{\lambda r}]}
\sup_{0<\rho\leq \frac{R}{r}}\sup_{y\in\mr^n}\Xint-_{B_\rho(y)}\Theta\big(\widetilde{\ba};B_\rho(y)\big)(x,z)dx \leq \delta.
$$

\item[$\bullet$]
$\widetilde{\Omega}$ is $(\delta,\frac{R}{r})$-Reifenberg flat.

\item[$\bullet$]
If $u\in W^{1,p}_0(\Omega)$ is a weak solution of \eqref{1}, then $\widetilde{u}\in W^{1,p}_0(\widetilde{\Omega})$ is a weak solution of the problem
\begin{equation}\label{UP}
\begin{cases}
\mathrm{div\,}\widetilde{\ba}(x,\widetilde{u}(x),D\widetilde{u}(x))=\mathrm{div\,}(|\widetilde{F}|^{p-2}\widetilde{F}) & \textrm{in}\ \widetilde{\Omega}\\
\widetilde{u}=0 & \textrm{on}\ \partial\widetilde{\Omega}.
\end{cases}
\end{equation}
\end{itemize}

\section{Comparison estimates}\label{Sec4}

A crucial step in the proof of the main result is ensured by appropriate comparison of the  weak solution to \eqref{UP} with  these of  the associated reference problems  \eqref{HP}, \eqref{FP} and \eqref{VP} below. Throughout the section, for the sake of simplicity,
we will use the notations $u,$ $F,$ $\ba$ and $\Omega,$ instead of  $\widetilde{u},$ $\widetilde{F},$ $\widetilde{\ba}$ and $\widetilde{\Omega},$ respectively.

We start with the following useful lemma.
\begin{lemma}\label{Lem1}
Let $\Omega\subset\mr^n$ be a bounded open set. Assume that $\ba:\Omega\times\mr\times\mr^n\to\mr^n$ satisfies \eqref{Mo} for a.a. $x\in\Omega$ and for some $p\in(1,2).$ Then, for any $\zeta_1,$ $\zeta_2\in W^{1,p}(\Omega_\rho),$ any non-negative function $\eta\in C^\infty(B_\rho),$ any bounded function $\phi$ defined on $\Omega_\rho$ and any constant $\tau>0,$ we have
\begin{align*}
\int_{\Omega_\rho} |D\zeta_1-D\zeta_2|^p\eta\; dx \leq &\tau\int_{\Omega_\rho} |D\zeta_1|^p\eta\; dx\\
&+ C\int_{\Omega_\rho} \langle \ba(x,\phi,D\zeta_1)-\ba(x,\phi,D\zeta_2),D\zeta_1-D\zeta_2\rangle\eta\; dx
\end{align*}
with $C>0$ depending only on $\gamma,$ $p$ and $\tau.$
\end{lemma}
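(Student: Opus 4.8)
The plan is to reduce the claim to a pointwise inequality valid a.e.\ in $\Omega_\rho$, then multiply by the cut-off $\eta$ and integrate. The essential trick, in the range $1<p<2$, is to split the power $|D\zeta_1-D\zeta_2|^{p}$ by Young's inequality in such a way that, after invoking the monotonicity \eqref{Mo}, an arbitrarily large constant is permitted in front of the ``energy'' term $\langle\ba(x,\phi,D\zeta_1)-\ba(x,\phi,D\zeta_2),D\zeta_1-D\zeta_2\rangle$, while only a small multiple of $|D\zeta_1|^{p}$ is produced.

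Concretely, I would first observe that on the set where $|D\zeta_1|+|D\zeta_2|=0$ one has $D\zeta_1=D\zeta_2$ and everything is trivial, so it suffices to work where $|D\zeta_1|+|D\zeta_2|>0$, and there use the identity
$$
|D\zeta_1-D\zeta_2|^{p}=\Bigl(|D\zeta_1-D\zeta_2|^{2}\bigl(|D\zeta_1|+|D\zeta_2|\bigr)^{p-2}\Bigr)^{p/2}\bigl(|D\zeta_1|+|D\zeta_2|\bigr)^{p(2-p)/2}.
$$
Applying Young's inequality with the conjugate exponents $\tfrac2p$ and $\tfrac2{2-p}$ (both $>1$ since $1<p<2$), with the free small parameter $\varepsilon$ placed on the second factor, gives
$$
|D\zeta_1-D\zeta_2|^{p}\le C(\varepsilon,p)\,|D\zeta_1-D\zeta_2|^{2}\bigl(|D\zeta_1|+|D\zeta_2|\bigr)^{p-2}+\varepsilon\bigl(|D\zeta_1|+|D\zeta_2|\bigr)^{p}.
$$
Multiplying by $\eta\ge0$ and integrating over $\Omega_\rho$, I would bound the first term on the right via \eqref{Mo} (used with $z=\phi(x)$, which is legitimate for a.e.\ $x$ since $\phi$ is fixed) by $\widetilde{\gamma}^{-1}\int_{\Omega_\rho}\langle\ba(x,\phi,D\zeta_1)-\ba(x,\phi,D\zeta_2),D\zeta_1-D\zeta_2\rangle\eta\,dx$; for the second term I would use $|D\zeta_1|+|D\zeta_2|\le 2|D\zeta_1|+|D\zeta_1-D\zeta_2|$, so that $\bigl(|D\zeta_1|+|D\zeta_2|\bigr)^{p}\le C_p|D\zeta_1|^{p}+C_p|D\zeta_1-D\zeta_2|^{p}$ with $C_p$ bounded (e.g.\ $C_p\le 2^{2(p-1)}$). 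This yields
\begin{align*}
\int_{\Omega_\rho}|D\zeta_1-D\zeta_2|^{p}\eta\,dx
&\le \frac{C(\varepsilon,p)}{\widetilde{\gamma}}\int_{\Omega_\rho}\langle\ba(x,\phi,D\zeta_1)-\ba(x,\phi,D\zeta_2),D\zeta_1-D\zeta_2\rangle\eta\,dx\\
&\quad +\varepsilon C_p\int_{\Omega_\rho}|D\zeta_1|^{p}\eta\,dx+\varepsilon C_p\int_{\Omega_\rho}|D\zeta_1-D\zeta_2|^{p}\eta\,dx.
\end{align*}
Since $\zeta_1,\zeta_2\in W^{1,p}(\Omega_\rho)$ and $\eta$ is bounded, the integral $\int_{\Omega_\rho}|D\zeta_1-D\zeta_2|^{p}\eta\,dx$ is finite, so for $\varepsilon$ small enough (say $\varepsilon C_p\le\tfrac12$) the last term is absorbed into the left-hand side; a further shrinking of $\varepsilon$, so that the resulting coefficient of $\int_{\Omega_\rho}|D\zeta_1|^{p}\eta\,dx$ does not exceed $\tau$, together with relabelling $C:=2C(\varepsilon,p)/\widetilde{\gamma}$ (depending only on $\gamma$, $p$, $\tau$), gives the asserted inequality.

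I do not expect a genuine obstacle here; the only point requiring care is this two-step choice of $\varepsilon$ — first small enough to permit the absorption, then smaller to meet the prescribed $\tau$ — while keeping track that the large Young constant only ever multiplies the monotonicity term. Handling the degenerate set $\{D\zeta_1=D\zeta_2=0\}$, where the negative power $(|D\zeta_1|+|D\zeta_2|)^{p-2}$ is formally singular but the inequality is trivial, and checking the finiteness that validates the absorption, are routine.
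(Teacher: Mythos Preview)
Your argument is correct and is exactly the standard route: factor $|D\zeta_1-D\zeta_2|^{p}$ to isolate the degenerate weight $(|D\zeta_1|+|D\zeta_2|)^{p-2}$, apply Young's inequality with exponents $2/p$ and $2/(2-p)$, invoke the monotonicity \eqref{Mo}, and absorb. The paper itself does not prove this lemma but defers to \cite[proof of Lemma~3.7]{BR} and \cite[Lemma~3.1]{NP}, where precisely this computation is carried out; so your proposal matches the intended approach. (One cosmetic slip: your claimed bound $C_p\le 2^{2(p-1)}$ for the coefficient of $|D\zeta_1|^{p}$ is off---the convexity inequality gives $2^{2p-1}$ there---but this has no bearing on the argument.)
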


\begin{proof}
See \cite[the proof of Lemma~3.7]{BR}, \cite[Lemma~3.1]{NP}.
\end{proof}

Let $\sigma > 6$ be a universal constant which will be chosen later in Lemma \ref{Lem3}, and consider a localized solution $u$ in $\Omega_\sigma$ of the problem
\begin{equation}\label{LUP}
\begin{cases}
\mathrm{div\,}\ba(x,u(x),Du(x))=\mathrm{div\,}(|\widetilde{F}|^{p-2}\widetilde{F}) & \textrm{in}\ \Omega_\sigma,\\
u=0 & \textrm{on}\ \partial \Omega\cap B_\sigma,
\end{cases}
\end{equation}
with
\begin{align}\label{31}
\|u\|_{L^\infty(\Omega_\sigma)}\leq \frac{M}{\lambda r}, \quad \frac{1}{|B_\sigma|}\int_{\Omega_\sigma} |Du|^p dx \leq 1,\quad \mathrm{and}\quad \frac{1}{|B_\sigma|}\int_{\Omega_\sigma} |F|^p dx \leq \delta^p.
\end{align}
Assume further that
\begin{align}\label{32}
\frac{1}{|B_6|}\int_{\Omega_6} |Du|^p dx \leq 1.
\end{align}

We let next  $h\in W^{1,p}(\Omega_\sigma)$ to be the weak solution of
\begin{equation}\label{HP}
\begin{cases}
\mathrm{div\,}\ba(x,u(x),Dh(x))=0 & \textrm{in}\ \Omega_\sigma,\\
h=u & \textrm{on}\ \partial \Omega_\sigma,
\end{cases}
\end{equation}
and $f\in W^{1,p}(\Omega_5)$  the weak solution of
\begin{equation}\label{FP}
\begin{cases}
\mathrm{div\,}\ba(x,\overline{u}_{\Omega_5},Df(x))=0 & \textrm{in}\ \Omega_5,\\
f=h & \textrm{on}\ \partial \Omega_5,
\end{cases}
\end{equation}
with
\begin{equation}\label{33}
\frac{1}{|B_5|}\int_{\Omega_5}\Theta\big(\ba;\Omega_5\big)(x,\overline{u}_{\Omega_5})dx \leq \delta
\end{equation}
and
\begin{equation}\label{34}
B_5^+\subset \Omega_5 \subset B_5\cap\{x:x_n>-10\delta\}.
\end{equation}

We consider finally the  limiting problem
\begin{equation}\label{VP}
\mathrm{div\,}\mathbf{A}(Dv(x))=0 \quad \textrm{in}\ U,
\end{equation}
where $U=B_4$ for the interior case  and $U=B_4^+$ for the boundary case, where the map $\mathbf{A}:\mr^n \to \mr^n$ is given by
$$
\mathbf{A}(\xi):=\frac{1}{|U|}\int_U \ba(x,\overline{u}_{\Omega_5},\xi)\; dx.
$$

\medskip

The following is the main result of this section.

\begin{lemma}\label{MainLem}
For any small constant $\varepsilon\in(0,1),$ there exist  a large constant $\sigma=\sigma(\gamma,\alpha, n,p,\Gamma,M,\varepsilon)> 6$ and a small constant $\delta=\delta(\gamma,\alpha, n,p,\Gamma,M,\varepsilon)>0$ such that if $u\in W^{1,p}(\Omega_\sigma)$ is a weak solution of \eqref{LUP} with
 \eqref{31}, \eqref{32}, \eqref{33} and \eqref{34},
then there exists a weak solution $v\in W^{1,p}(U)$ of \eqref{VP} such that
$$
\|D\bar{v}\|_{L^\infty(\Omega_3)}\leq N_0\quad \text{and}\quad\int_{\Omega_4} |Du-D\bar{v}|^p dx \leq \varepsilon^p
$$
for some constant $N_0=N_0(\gamma,n,p)>1.$ Here, the function $\bar{v}\in W^{1,p}(\Omega_4)$ is equal to
$v$ when $U=B_4,$ and $\bar{v}$ is the zero extension of $v$ from $B_4^+$ to $B_4$ when $U=B_4^+.$
\end{lemma}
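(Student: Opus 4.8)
The plan is to compare $u$ successively with the solutions $h$, $f$, and $v$ of the reference problems \eqref{HP}, \eqref{FP}, \eqref{VP}, and to control each comparison step on a slightly smaller ball. First I would bound $\int_{\Omega_\sigma}|Du-Dh|^p\,dx$. Since both $u$ and $h$ solve equations with the \emph{same} $z$-frozen nonlinearity $\ba(x,u(x),\cdot)$ — one with right-hand side $\operatorname{div}(|F|^{p-2}F)$, the other homogeneous — and $h=u$ on $\partial\Omega_\sigma$, testing the two weak formulations against $u-h$ and invoking the monotonicity \eqref{Mo} (for $p<2$ routed through Lemma~\ref{Lem1}) yields $\fint_{\Omega_\sigma}|Du-Dh|^p\,dx\lesssim\fint_{\Omega_\sigma}|F|^p\,dx\le\delta^p$, using \eqref{31}; one also gets $\fint_{\Omega_\sigma}|Dh|^p\,dx\lesssim 1$. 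Next, for the step $h\leadsto f$ on $\Omega_5$: here the nonlinearities differ only in the $z$-slot, $\ba(x,u(x),\cdot)$ versus $\ba(x,\overline{u}_{\Omega_5},\cdot)$, and $f=h$ on $\partial\Omega_5$. Testing against $h-f$ and splitting $\langle\ba(x,u,Dh)-\ba(x,\overline u_{\Omega_5},Df),Dh-Df\rangle$, the $z$-Hölder bound \eqref{4} (in its scaled form \eqref{S1}, giving a factor $\Gamma(\lambda r)^\alpha$) together with the oscillation control on $u$ produces $\fint_{\Omega_5}|Dh-Df|^p\,dx\lesssim (\text{osc of }u\text{ on }\Omega_5)^{\alpha'}$ for some $\alpha'>0$. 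The crucial input making this small is that $h$ — solving a homogeneous uniformly elliptic equation of $p$-Laplacian type — enjoys interior and (via \eqref{34}, the Reifenberg flatness) up-to-the-boundary H\"older continuity and higher gradient integrability, so $\operatorname{osc}_{\Omega_5}u\lesssim\operatorname{osc}_{\Omega_5}h+\|u-h\|\to 0$ as $\sigma\to\infty$ (after rescaling, the relevant ball shrinks relative to the $L^\infty$-bound $M/(\lambda r)$ in \eqref{31}), and simultaneously $\lambda r\le R/\sigma$ can be made small; choosing $\sigma$ large and $\delta$ small kills this term.

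For the final step $f\leadsto v$, both $f$ and $\bar v$ are (essentially) solutions of homogeneous equations with $z$-independent nonlinearities, $\ba(x,\overline{u}_{\Omega_5},\cdot)$ versus its $x$-average $\mathbf A(\cdot)$; this is exactly the situation handled in \cite{BR}, so I would invoke the comparison estimate there: $\fint_{\Omega_4}|Df-D\bar v|^p\,dx\lesssim$ (a power of) the small-BMO quantity $\fint_{\Omega_5}\Theta(\ba;\Omega_5)(x,\overline u_{\Omega_5})\,dx\le\delta$ from \eqref{33}, again using \eqref{34}. The Lipschitz bound $\|D\bar v\|_{L^\infty(\Omega_3)}\le N_0$ is the standard $C^{1,\beta}$ regularity for the limiting equation \eqref{VP} — interior estimates for $U=B_4$ and, for $U=B_4^+$, the flat-boundary gradient estimate for solutions vanishing on $\{x_n=0\}$ together with the fact that the zero extension $\bar v$ of such a solution is still $W^{1,p}$ and Lipschitz up to the flat piece; this fixes $N_0=N_0(\gamma,n,p)$.

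Assembling, $\|Du-D\bar v\|_{L^p(\Omega_4)}\le\|Du-Dh\|+\|Dh-Df\|+\|Df-D\bar v\|$ on the nested balls, and each term is bounded by a constant times a positive power of $\delta$ or of $(\lambda r)^\alpha$ or of $\operatorname{osc}_{\Omega_5}u$, all of which are controlled by first choosing $\sigma$ large (to force the oscillation and the rescaled diameter small through \eqref{31}, \eqref{32}, \eqref{34}) and then $\delta$ small depending on $\sigma$; this gives the right-hand side $\le\varepsilon^p$. The main obstacle is the middle step: one must quantify how $\operatorname{osc}_{\Omega_5}u$ (equivalently $\operatorname{osc}_{\Omega_5}h$) is made small. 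This is where the two-parameter scaling pays off — the bound $\|u\|_{L^\infty(\Omega_\sigma)}\le M/(\lambda r)$ in \eqref{31} combined with De Giorgi--Nash--Moser/boundary H\"older estimates for $h$ on $B_5\subset B_\sigma$ gives $\operatorname{osc}_{\Omega_5}h\le C(M/(\lambda r))(5/\sigma)^\kappa$ for some $\kappa>0$, so choosing $\sigma$ large relative to a fixed tolerance, and then using $\lambda r\le R/\sigma$, makes the product $(\lambda r)^\alpha(\operatorname{osc}u)^{\alpha}$ — and hence the $h\leadsto f$ comparison error — as small as desired; care is needed to ensure the dependence of $\sigma$ and $\delta$ is only on $\gamma,\alpha,n,p,\Gamma,M,\varepsilon$ and not circular.
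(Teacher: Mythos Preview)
Your three-step comparison $u\to h\to f\to\bar v$ assembled by the triangle inequality is exactly the paper's route, and you correctly identify the ingredients: monotonicity \eqref{Mo} (via Lemma~\ref{Lem1} when $p<2$) for the $u\to h$ step, the $z$-H\"older bound \eqref{S1} together with oscillation and higher-integrability of $h$ for the $h\to f$ step, and the perturbation estimates of \cite{BR} for the $f\to\bar v$ step and the Lipschitz bound on $D\bar v$.

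There is, however, a genuine gap in your $h\to f$ mechanism. You invoke ``$\lambda r\le R/\sigma$,'' but this is false: only $r\le R/\sigma$, while $\lambda>0$ is a free normalization parameter (in Section~\ref{Sec5} one effectively lets $\lambda\to\infty$). Without a bound on $\lambda r$, the contribution $(\lambda r)^p\int_{\Omega_5}|u-h|^p\,dx$ arising from $|u-\bar u_{\Omega_5}|$ cannot be made small by your argument. The paper's fix (Lemma~\ref{Lem3}) is a case distinction: if $(M/(\lambda r))^p\lesssim\varepsilon^p$, then $\|h\|_{L^\infty}\le M/(\lambda r)$ already gives $\int_{\Omega_5}|Dh-Df|^p\le C\int_{\Omega_6}|h|^p\lesssim\varepsilon^p$ directly; otherwise $\lambda r\le CM/\varepsilon$ is merely \emph{bounded}, not small. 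In that second case your key observation that $(\lambda r)\operatorname{osc}_{\Omega_5}h\le C(\lambda r)(M/(\lambda r))\sigma^{-\beta}=CM\sigma^{-\beta}$ (Proposition~\ref{LemHOL}) does handle the $h$-oscillation piece, while the remaining $(\lambda r)|u-h|$ contribution is controlled in $L^p$ via Poincar\'e and Lemma~\ref{Lem2} using only the boundedness of $\lambda r$. A smaller point: the higher integrability you mention (Proposition~\ref{LemHIG}) enters specifically to split $\int|u-\bar u_{\Omega_5}|^{\alpha p/(p-1)}|Dh|^p$ by H\"older's inequality; you should place it there rather than leave it unanchored.
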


The proof  is based on the following Lemmas~\ref{Lem2}, \ref{Lem3} and \ref{Lem4}.

\begin{lemma}\label{Lem2}
Let $\sigma>6$ and $0<\varepsilon_1<1.$  Then there exists a small constant $\delta=\delta(n,p,\gamma,\sigma,\varepsilon_1)>0$ such that if $u\in W^{1,p}(\Omega_\sigma)$ is a weak solution of \eqref{LUP} and
 $h \in W^{1,p}(\Omega_\sigma)$ is the weak solution of \eqref{HP} with \eqref{31},
then
\begin{align}\label{42}
\frac{1}{|B_6|}\int_{\Omega_6} |Du-Dh|^p dx \leq \varepsilon_1^p.
\end{align}
\end{lemma}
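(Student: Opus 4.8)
The goal is the comparison estimate \eqref{42} between the solution $u$ of \eqref{LUP} and the solution $h$ of the frozen-coefficient problem \eqref{HP}, where the freezing is only in the $z$-slot at $z=u(x)$. The plan is a standard energy-comparison argument, carried out separately for $p\ge 2$ and $1<p<2$.

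\medskip

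\noindent\textbf{Plan of proof.}
Since $u-h\in W^{1,p}_0(\Omega_\sigma)$ (as $h=u$ on $\partial\Omega_\sigma$ and both vanish on $\partial\Omega\cap B_\sigma$), I would test the weak formulations of \eqref{LUP} and \eqref{HP} with $\phi=u-h$ and subtract, obtaining
$$
\int_{\Omega_\sigma}\big\langle \ba(x,u,Du)-\ba(x,u,Dh),\,Du-Dh\big\rangle\,dx
=\int_{\Omega_\sigma}\big\langle |F|^{p-2}F,\,Du-Dh\big\rangle\,dx .
$$
Note the $z$-argument is \emph{the same} ($z=u(x)$) in both terms on the left, so the $z$-H\"older condition \eqref{4} plays no role here — only the monotonicity \eqref{Mo} (equivalently \eqref{Mo1}) is used. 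For $p\ge 2$, the left side is bounded below by $\widetilde\gamma\int_{\Omega_\sigma}|Du-Dh|^p\,dx$; estimating the right side by H\"older and Young's inequality against $|F|^p$ and $|Du-Dh|^p$ gives $\int_{\Omega_\sigma}|Du-Dh|^p\,dx\le C\int_{\Omega_\sigma}|F|^p\,dx\le C\,\delta^p|B_\sigma|$ by \eqref{31}. Dividing by $|B_6|$ and choosing $\delta$ small (depending on $n,p,\gamma,\sigma$) so that $C\delta^p|B_\sigma|/|B_6|\le\varepsilon_1^p$ yields \eqref{42}.

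\medskip

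\noindent\textbf{The case $1<p<2$.}
Here the monotonicity \eqref{Mo} has the degenerate form with the weight $(|Du|+|Dh|)^{p-2}$, so one cannot directly bound $\int|Du-Dh|^p$ from below. This is the main technical point, and it is precisely what Lemma~\ref{Lem1} is designed to handle: applying Lemma~\ref{Lem1} with $\zeta_1=u$, $\zeta_2=h$, $\phi=u(\cdot)$, $\eta\equiv 1$ on $\Omega_\sigma$ (or a cutoff equal to $1$ on $\Omega_6$, if one prefers to localize the energy at scale $6$) and $\tau$ small, one gets
$$
\int_{\Omega_\sigma}|Du-Dh|^p\,dx\le \tau\int_{\Omega_\sigma}|Du|^p\,dx
+C\int_{\Omega_\sigma}\big\langle \ba(x,u,Du)-\ba(x,u,Dh),Du-Dh\big\rangle\,dx .
$$
The first term is controlled by $\tau|B_\sigma|$ using \eqref{31}; the second is the quantity computed above, hence $\le C\int_{\Omega_\sigma}\langle|F|^{p-2}F,Du-Dh\rangle\,dx$. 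To absorb this last integral I would use H\"older with exponents $p'$ and $p$, then Young's inequality with a small parameter to absorb a $\tau'\int|Du-Dh|^p$ term into the left side, leaving a bound by $C\int_{\Omega_\sigma}|F|^p\,dx+(\tau+\tau')|B_\sigma|$. Now choosing first $\tau,\tau'$ small relative to $\varepsilon_1^p|B_6|/|B_\sigma|$ and then $\delta$ small relative to the same quantity gives $\frac{1}{|B_6|}\int_{\Omega_6}|Du-Dh|^p\,dx\le \varepsilon_1^p$.

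\medskip

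\noindent\textbf{Main obstacle.}
The only real subtlety is the sub-quadratic range $1<p<2$, where the ``wrong'' homogeneity of the monotonicity forces the use of Lemma~\ref{Lem1} and the extra bookkeeping of two small parameters $\tau$ (from Lemma~\ref{Lem1}) and $\tau'$ (from Young's inequality on the $F$-term), both of which must be chosen before $\delta$. One should also make sure the normalization bounds in \eqref{31} — in particular $\frac1{|B_\sigma|}\int_{\Omega_\sigma}|Du|^p\,dx\le1$ and $\frac1{|B_\sigma|}\int_{\Omega_\sigma}|F|^p\,dx\le\delta^p$ — are the exact quantities that appear, so that the final constant $\delta$ depends only on $n,p,\gamma,\sigma,\varepsilon_1$ as claimed; the dependence on $\sigma$ enters only through the volume ratio $|B_\sigma|/|B_6|$. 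Everything else is routine application of H\"older and Young.
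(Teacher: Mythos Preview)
Your proposal is correct and follows essentially the same approach as the paper: test \eqref{LUP} and \eqref{HP} with $u-h$, use the monotonicity \eqref{Mo} directly for $p\ge2$ and via Lemma~\ref{Lem1} for $1<p<2$, then absorb with Young's inequality and invoke the normalization \eqref{31}. The only cosmetic discrepancy is that in your $1<p<2$ bookkeeping the residual after absorption should read $2\tau\,|B_\sigma|+C(\tau)\int_{\Omega_\sigma}|F|^p\,dx$ rather than $(\tau+\tau')|B_\sigma|$ (since $\tau'$ is chosen \emph{after} $\tau$ to make $C(\tau)\tau'<1$ and hence disappears upon absorption), but this does not affect the argument.
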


\begin{proof}
The proof will be divided into two cases.

\medskip

$\mathbf{Case\ 1:}\ 1<p<2.$
Taking $u-h$ as a test function for equations \eqref{LUP} and \eqref{HP},  it follows from the Young inequality with $\tau_1>0$ that
\begin{align}\label{43}
\int_{\Omega_\sigma} \langle \ba(x,u,Du)-\ba(x,u,Dh),Du-Dh\rangle\; dx = \int_{\Omega_\sigma} \langle |F|^{p-2}F, Du-Dh \rangle\; dx\\
\nonumber \leq \tau_1\int_{\Omega_\sigma}|Du-Dh|^p\; dx + C(\tau_1)\int_{\Omega_\sigma} |F|^p\; dx.
\end{align}
Then Lemma~\ref{Lem1} implies
\begin{align*}
\int_{\Omega_\sigma} &|Du-Dh|^p\; dx\\
&\leq \tau\int_{\Omega_\sigma} |Du|^p\; dx + C_0(\tau)\int_{\Omega_\sigma} \langle \ba(x,u,Du)-\ba(x,u,Dh),Du-Dh\rangle\; dx\\
&\leq \tau\int_{\Omega_\sigma} |Du|^p\; dx + C_0\tau_1\int_{\Omega_\sigma}|Du-Dh|^p\; dx + C(\tau,\tau_1)\int_{\Omega_\sigma} |F|^p\; dx.
\end{align*}
Setting $\tau_1=\frac{1}{2C_0}$ in the above inequality, we obtain
\begin{align*}
\int_{\Omega_\sigma} &|Du-Dh|^p\; dx \leq 2\tau\int_{\Omega_\sigma} |Du|^p\; dx +  C(\tau)\int_{\Omega_\sigma} |F|^p\; dx,
\end{align*}
and so \eqref{31} yields
\begin{align*}
\frac{1}{|B_6|}\int_{\Omega_6} |Du-Dh|^p\; dx &\leq \left(\frac{\sigma}{6}\right)^n\frac{1}{|B_\sigma|}\int_{\Omega_\sigma} |Du-Dh|^p\; dx\\
&\leq \left(\frac{\sigma}{6}\right)^n\frac{2\tau}{|B_\sigma|}\int_{\Omega_\sigma} |Du|^p\; dx + C(\tau,\sigma)\frac{1}{|B_\sigma|}\int_{\Omega_\sigma} |F|^p\; dx\\
&\leq 2\tau\left(\frac{\sigma}{6}\right)^n + C(\tau,\sigma)\delta^p.
\end{align*}
Now, taking the constants $\tau$ and $\delta$ sufficiently small so that
$$2\tau\left(\frac{\sigma}{6}\right)^n \leq \frac{\varepsilon_1^p}{2}\quad \mathrm{and} \quad C(\tau,\sigma)\delta^p \leq \frac{\varepsilon_1^p}{2},$$
we obtain the conclusion \eqref{42} when $1<p<2.$

\medskip

$\mathbf{Case\ 2:}\ p\geq 2.$
Having in mind  \eqref{Mo} and \eqref{43}, we get
\begin{align*}
\int_{\Omega_\sigma} |Du-Dh|^p\; dx &\leq \widetilde{\gamma}^{-1}\int_{\Omega_\sigma} \langle \ba(x,u,Du)-\ba(x,u,Dh),Du-Dh\rangle\; dx\\
&\leq  \widetilde{\gamma}^{-1}\tau_1\int_{\Omega_\sigma}|Du-Dh|^p\; dx + C(\tau_1)\int_{\Omega_\sigma} |F|^p\; dx
\end{align*}
for any $\tau_1>0.$ Taking $\tau_1=\frac{\widetilde{\gamma}}{2}$ in the above inequality,  it follows from \eqref{31} that
\begin{align*}
\frac{1}{|B_6|}\int_{\Omega_6} |Du-Dh|^p\; dx &\leq \left(\frac{\sigma}{6}\right)^n\frac{1}{|B_\sigma|}\int_{\Omega_\sigma} |Du-Dh|^p\; dx\\
&\leq \frac{C_1(\gamma,p)\sigma^n}{|B_\sigma|}\int_{\Omega_\sigma} |F|^p\; dx \leq C_1\sigma^n\delta^p.
\end{align*}

We choose now the constant $\delta$  small enough to have $C_1\sigma^n\delta^p \leq \varepsilon_1^p,$ and this gives the claim
of Lemma~\ref{Lem2}.
\end{proof}

We need the following higher integrability result for the equation \eqref{HP}.

\begin{proposition}\label{LemHIG}
(\cite[Theorem~1.1]{KK}, \cite[Theorem~2.2]{BW2} \cite[Lemma~3.2]{BR})
Let $h\in W^{1,p}(\Omega_\sigma)$ be a solution of \eqref{HP}. Then there is a positive constants $p_0>p$ depending only on $\gamma,$ $n$ and $p$ such that for any $p_1\in (p,p_0),$
$$
\left(\int_{\Omega_5} |Dh|^ {p_1}\ dx\right)^\frac{1}{p_1} \leq C\left(\int_{\Omega_6} |Dh|^p\ dx\right)^\frac{1}{p}
$$
holds, where $C>0$ depends only on $\gamma,$ $n,$ $p$ and $p_0.$
\end{proposition}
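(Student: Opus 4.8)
The plan is to establish a reverse H\"older inequality with increasing supports for $|Dh|$ and then to invoke Gehring's lemma. Fix $x_0\in\overline{\Omega_6}$ and a radius $\rho>0$ small enough that $B_{2\rho}(x_0)\subset B_\sigma$, and take a cutoff $\eta\in C^\infty_0(B_{2\rho}(x_0))$ with $\eta\equiv1$ on $B_\rho(x_0)$ and $|D\eta|\le C/\rho$. First I would test the equation \eqref{HP} with $\phi=\eta^p(h-c)$, where $c=\overline{h}_{\Omega_{2\rho}(x_0)}$ in the interior case $B_{2\rho}(x_0)\subset\Omega$ and $c=0$ in the boundary case $x_0\in\partial\Omega$; this $\phi$ lies in $W^{1,p}_0(\Omega_\sigma)$ because $h=u=0$ on $\partial\Omega\cap B_\sigma$, the cutoff $\eta$ is supported in $B_\sigma$, and $c=0$ in the boundary case. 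Writing $D\phi=p\eta^{p-1}(h-c)D\eta+\eta^p Dh$, using the ellipticity and growth bounds in \eqref{3}, and absorbing the term containing $D\eta$ by Young's inequality, one obtains the Caccioppoli estimate
$$
\Xint-_{\Omega_\rho(x_0)}|Dh|^p\,dx\le C\,\Xint-_{\Omega_{2\rho}(x_0)}\left|\frac{h-c}{\rho}\right|^p\,dx ,
$$
with $C=C(\gamma,n,p)$, where in the boundary case $h$ is understood to be extended by $0$ to $B_{2\rho}(x_0)\setminus\Omega$.

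Next I would bound the right-hand side by a Sobolev--Poincar\'e inequality. In the interior case the choice $c=\overline{h}_{\Omega_{2\rho}(x_0)}$ gives, for a suitable exponent $p_*<p$,
$$
\Xint-_{B_{2\rho}(x_0)}\left|\frac{h-c}{\rho}\right|^p\,dx\le C\left(\Xint-_{B_{2\rho}(x_0)}|Dh|^{p_*}\,dx\right)^{p/p_*}.
$$
In the boundary case, the zero-extension of $h$ belongs to $W^{1,p}(B_{2\rho}(x_0))$ and vanishes on $B_{2\rho}(x_0)\setminus\Omega$; by the $(\delta,R)$-Reifenberg flatness with $\delta$ small one has $|B_{2\rho}(x_0)\setminus\Omega|\ge c_0|B_{2\rho}(x_0)|$ with $c_0=c_0(n)>0$, so the Sobolev--Poincar\'e inequality for functions with a zero set of definite relative measure yields the same bound with $c=0$ and with a constant independent of $x_0$ and $\rho$. (Points $x_0\in\overline{\Omega_6}$ with $B_{2\rho}(x_0)\not\subset\Omega$ but $x_0\notin\partial\Omega$ are handled by the boundary estimate centred at a nearby point of $\partial\Omega$ after a harmless enlargement of the ball.) Combining with the Caccioppoli estimate, and setting $Dh\equiv0$ outside $\Omega$, we arrive at the reverse H\"older inequality
$$
\left(\Xint-_{\Omega_\rho(x_0)}|Dh|^p\,dx\right)^{1/p}\le C\left(\Xint-_{\Omega_{2\rho}(x_0)}|Dh|^{p_*}\,dx\right)^{1/p_*} ,
$$
valid for every such ball, with $C$ independent of $x_0$ and $\rho$.

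Finally, since $g:=|Dh|^{p_*}$ lies in $L^{p/p_*}(\Omega_6)$ and satisfies the above reverse H\"older inequality with exponent $p/p_*>1$ on all small balls, Gehring's lemma in the Giaquinta--Modica form (see, e.g., \cite{BR}) produces a self-improving exponent: there is $p_0>p$, depending only on $\gamma,n,p$, such that $|Dh|\in L^{p_1}_{\text{loc}}$ for every $p_1\in(p,p_0)$, together with a quantitative estimate on concentric balls. A standard finite covering of $\overline{\Omega_5}$ by such balls whose enlargements lie in $\Omega_6$ then gives
$$
\left(\int_{\Omega_5}|Dh|^{p_1}\,dx\right)^{1/p_1}\le C\left(\int_{\Omega_6}|Dh|^{p}\,dx\right)^{1/p} ,
$$
with $C=C(\gamma,n,p,p_0)$. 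The main obstacle is the boundary case, and specifically making the Sobolev--Poincar\'e constant for the zero-extended solution uniform in the point and the scale: this is exactly where the smallness of $\delta$ in the Reifenberg condition enters, guaranteeing that $B_{2\rho}(x_0)\setminus\Omega$ always carries a fixed proportion of the mass of $B_{2\rho}(x_0)$. Once that uniformity is secured, the interior and boundary reverse H\"older inequalities merge into one and Gehring's lemma finishes the argument with no further use of the equation.
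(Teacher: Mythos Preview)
The paper does not supply its own proof of this proposition; it simply cites \cite{KK}, \cite{BW2}, and \cite{BR}, where the result is obtained precisely by the Caccioppoli--reverse H\"older--Gehring route you outline. Your sketch is correct and coincides with the standard argument in those references: freezing the $z$-variable at $u(x)$ reduces \eqref{HP} to the $\ba(x,Dh)$ setting, and the Reifenberg flatness provides the uniform exterior measure-density needed for the boundary Sobolev--Poincar\'e step.
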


We also need the following oscillation theorem for the equation \eqref{HP}.

\begin{proposition}
\label{LemHOL}
(\cite[Theorem~4.2]{T}, \cite[Theorem~7.7]{G})
Let $h\in W^{1,p}(\Omega_\sigma)$ be a solution of \eqref{HP}. Then there is a positive constant $\beta>0$ depending only on  $\gamma,$ $n$ and $p$ such that
$$
\osc_{\Omega_5} h \leq C\left(\frac{5}{\sigma}\right)^\beta \|h\|_{L^\infty(\Omega_\sigma)}
$$
holds, where $C>0$ depend only on $\gamma,$ $n$ and $p.$
\end{proposition}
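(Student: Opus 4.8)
The plan is to obtain the oscillation estimate from an iterated Campanato-type decay, using the bound $\|h\|_{L^\infty(\Omega_\sigma)}$ on the solution of \eqref{HP} together with the boundary condition $h = u = 0$ on $\partial\Omega \cap B_\sigma$. First I would recall that, by De Giorgi--Nash--Moser theory applied to the quasilinear equation $\mathrm{div\,}\ba(x,u(x),Dh(x)) = 0$ — whose coefficient structure \eqref{S} is exactly of the standard $p$-Laplacian type once the middle slot is frozen at the fixed function $u(x)$ — any weak solution $h$ is locally Hölder continuous, with a quantitative oscillation decay of the form
\begin{equation}\label{eq:decay}
\osc_{\Omega_\rho(x_0)} h \;\leq\; C\Big(\frac{\rho}{\rho_0}\Big)^\beta \,\osc_{\Omega_{\rho_0}(x_0)} h
\end{equation}
for all concentric balls $0 < \rho \leq \rho_0$, with $\beta = \beta(\gamma,n,p) \in (0,1)$ and $C = C(\gamma,n,p)$. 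This is precisely the content of the cited references (\cite[Theorem~4.2]{T} for the interior version; \cite[Theorem~7.7]{G} handles the nonhomogeneous/boundary structure). The novelty here is only bookkeeping: one must check that the constants depend only on $\gamma, n, p$ and not on the frozen profile $u$, which is immediate because the ellipticity/growth bounds \eqref{S} are $u$-uniform.

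Next I would split into the interior and boundary cases. In the interior case ($\Omega_\sigma = B_\sigma$, say centered at a point whose distance to $\partial\Omega$ exceeds $\sigma$, or more precisely $B_5 \subset \Omega$), I apply \eqref{eq:decay} with $\rho = 5$ and $\rho_0 = \sigma$ on concentric balls, and bound the right-hand oscillation crudely by $\osc_{B_\sigma} h \leq 2\|h\|_{L^\infty(\Omega_\sigma)}$, which yields the claim with the factor $(5/\sigma)^\beta$. In the boundary case I use that $h = 0$ on $\partial\Omega \cap B_\sigma$ combined with the Reifenberg flatness (or, more weakly, an exterior-measure-density condition satisfied by Reifenberg flat domains with small $\delta$): this gives a \emph{boundary} oscillation decay, where the point $x_0 \in \partial\Omega$ is chosen near the center, and one gets $\osc_{\Omega_5} h \leq C (5/\sigma)^\beta \sup_{\Omega_\sigma}|h|$ directly, because the boundary values vanish so the oscillation on the smallest scale is controlled by the $L^\infty$ norm times the decay factor. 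Combining the two cases gives the stated inequality uniformly.

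The main obstacle — though a mild one at this level of generality — is ensuring that the boundary Hölder estimate holds with constants independent of the particular Reifenberg flat domain $\Omega_\sigma$ (only through the smallness parameter $\delta$, which we are free to shrink) and independent of the frozen function $u$. This is standard: Reifenberg flat domains with sufficiently small $\delta$ satisfy a uniform exterior corkscrew/measure-density condition, which is exactly what the boundary De Giorgi iteration needs, and the iteration constants in both \cite{T} and \cite{G} are explicit in the ellipticity data. Hence no genuinely new argument is required; the proof consists of invoking these two classical references, verifying the uniformity of constants, and iterating the one-step decay \eqref{eq:decay} from scale $\sigma$ down to scale $5$.
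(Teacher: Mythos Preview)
The paper does not prove this proposition at all: it is stated as a known result with references to \cite[Theorem~4.2]{T} and \cite[Theorem~7.7]{G}, and no argument is given. Your proposal is essentially a correct unpacking of what those references contain (interior De~Giorgi--Nash--Moser oscillation decay plus a boundary version using the vanishing Dirichlet data and a measure-density/corkscrew property of Reifenberg flat domains), together with the observation that the constants depend only on the structure data $\gamma,n,p$ because the frozen profile $u(x)$ enters only through the uniform bounds \eqref{S}. So your approach is not different from the paper's---it is simply more detailed than the paper, which treats the estimate as a black box.
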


Now, we compare the weak solution $h\in W^{1,p}(\Omega_\sigma)$ of \eqref{HP} with the weak solution $f\in W^{1,p}(\Omega_5)$ of \eqref{FP} to have the following result.

\begin{lemma}\label{Lem3}

For any $\varepsilon>0,$ there are two constants $\delta\in\left(0,\frac{1}{8}\right)$ and $\sigma>6$ depending only on $\gamma,$ $n,$ $p,$ $\Gamma,$ $\alpha,$ $M$ and $\varepsilon,$ such that if $u\in W^{1,p}(\Omega_\sigma)$ is a weak solution of \eqref{LUP} and $f\in W^{1,p}(\Omega_5)$ is the weak solution of \eqref{VP} with \eqref{31}, \eqref{32} and \eqref{34}, then
$$
\int_{\Omega_5} |Dh-Df|^p dx \leq \varepsilon^p.
$$
\end{lemma}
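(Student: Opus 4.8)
The plan is to compare $h$ (which solves \eqref{HP} with the $z$-slot frozen at the \emph{function} $u(x)$) with $f$ (which solves \eqref{FP} with the $z$-slot frozen at the \emph{constant} $\overline{u}_{\Omega_5}$), and to exploit the fact that by Proposition~\ref{LemHOL} the oscillation of $h$ on $\Omega_5$ is small once $\sigma$ is large, so that $u(x)$ and $\overline{u}_{\Omega_5}$ are close in the region where the comparison takes place. First I would test the equations \eqref{HP} and \eqref{FP} against $h-f\in W^{1,p}_0(\Omega_5)$ (extended by zero outside $\Omega_5$) to obtain
\begin{equation*}
\int_{\Omega_5}\big\langle \ba(x,u(x),Dh)-\ba(x,\overline{u}_{\Omega_5},Df),D(h-f)\big\rangle\,dx=0 .
\end{equation*}
Rewriting this by inserting and subtracting $\ba(x,\overline{u}_{\Omega_5},Dh)$, I split it into a monotone main term $\int_{\Omega_5}\langle \ba(x,\overline{u}_{\Omega_5},Dh)-\ba(x,\overline{u}_{\Omega_5},Df),D(h-f)\rangle\,dx$ and an error term $\int_{\Omega_5}\langle \ba(x,u(x),Dh)-\ba(x,\overline{u}_{\Omega_5},Dh),D(h-f)\rangle\,dx$, which by the $z$-H\"older continuity \eqref{S1} (here written as \eqref{4} in the rescaled notation) is bounded by $\Gamma\,(\osc_{\Omega_5}h)^\alpha\int_{\Omega_5}|Dh|^{p-1}|D(h-f)|\,dx$.

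Next I would absorb and estimate. For $p\geq2$ the monotone term controls $\int_{\Omega_5}|Dh-Df|^p\,dx$ directly via \eqref{Mo1}; for $1<p<2$ I would instead invoke Lemma~\ref{Lem1} with $\zeta_1=h$, $\zeta_2=f$, $\phi=\overline{u}_{\Omega_5}$, $\eta\equiv1$ on $\Omega_5$, to reduce $\int_{\Omega_5}|Dh-Df|^p\,dx$ to $\tau\int_{\Omega_5}|Dh|^p\,dx$ plus a constant times the same monotone term. In both cases, after a H\"older inequality on the error term (with exponents $p/(p-1)$ and $p$), and Young's inequality to absorb the $|D(h-f)|$ factor, one arrives at
\begin{equation*}
\int_{\Omega_5}|Dh-Df|^p\,dx\leq C\,(\osc_{\Omega_5}h)^{\alpha p/(p-1)}\int_{\Omega_5}|Dh|^p\,dx+(\text{small})\int_{\Omega_5}|Dh|^p\,dx .
\end{equation*}
It remains to bound $\int_{\Omega_5}|Dh|^p\,dx$ and $\osc_{\Omega_5}h$ by universal constants. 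For the gradient, testing \eqref{HP} with $h-u$ together with the ellipticity bounds \eqref{S}, a Caccioppoli-type argument, and the normalizations \eqref{31}, \eqref{32} give $\int_{\Omega_5}|Dh|^p\,dx\leq C$; the higher integrability of Proposition~\ref{LemHIG} is not strictly needed for this particular lemma but is consistent with it. For the oscillation, Proposition~\ref{LemHOL} gives $\osc_{\Omega_5}h\leq C(5/\sigma)^\beta\|h\|_{L^\infty(\Omega_\sigma)}$, and the maximum principle for \eqref{HP} yields $\|h\|_{L^\infty(\Omega_\sigma)}\leq\|u\|_{L^\infty(\Omega_\sigma)}\leq M/(\lambda r)$. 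Here one must be a little careful: the factor $M/(\lambda r)$ is not a priori bounded, so I would only use the oscillation estimate in tandem with the fact that along the scaling construction the relevant quantity is controlled — more precisely, since we only ever apply this at scales where \eqref{31}–\eqref{32} hold, $h-u$ (hence $h$ minus a constant, by a Poincar\'e/trace argument near $\partial\Omega\cap B_\sigma$ where $u=0$) has controlled energy, and one can replace $\|h\|_{L^\infty}$ in Proposition~\ref{LemHOL} by $\osc_{\Omega_\sigma}h$ plus a term that vanishes as $\delta\to0$ using the boundary flatness \eqref{34}.

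The main obstacle I anticipate is precisely this last point: turning $\osc_{\Omega_5}h$ small in a way that is \emph{uniform} in the scaling parameters $\lambda,r$. The clean statement of Proposition~\ref{LemHOL} has $\|h\|_{L^\infty(\Omega_\sigma)}$ on the right, which under the two-parameter rescaling is $O(M/(\lambda r))$ and thus unbounded; the resolution is that the H\"older/oscillation estimate localizes, so one should apply it to $h$ on a ball where $h$ already has small oscillation coming from the energy bound $\frac{1}{|B_\sigma|}\int_{\Omega_\sigma}|Dh|^p\,dx\lesssim1$ (inherited from \eqref{31} via Lemma~\ref{Lem2}-type reasoning), together with the boundary condition $h=u=0$ on $\partial\Omega\cap B_\sigma$ and the Reifenberg flatness \eqref{34} which forces $h$ to be small on a large portion of $B_\sigma$. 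Choosing first $\sigma$ large so that $(5/\sigma)^{\alpha\beta p/(p-1)}$ beats the constant $C$, and then $\delta$ small so that the boundary contribution and the error from Lemma~\ref{Lem2} are below $\varepsilon^p/2$ each, yields the claim; the order of quantifiers ($\varepsilon\to\sigma\to\delta$) matches the statement.
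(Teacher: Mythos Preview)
Your approach has the right skeleton but there is a genuine gap at precisely the point you flag. First a slip that matters: the error term is $\langle \ba(x,u(x),Dh)-\ba(x,\overline u_{\Omega_5},Dh),D(h-f)\rangle$, so by \eqref{S1} its integrand is bounded by $\Gamma(\lambda r)^\alpha|u(x)-\overline u_{\Omega_5}|^\alpha|Dh|^{p-1}|D(h-f)|$, not by a constant times $(\osc_{\Omega_5}h)^\alpha|Dh|^{p-1}|D(h-f)|$. Proposition~\ref{LemHOL} controls the oscillation of $h$, not of $u$, and $u$ is merely in $W^{1,p}$, so $|u(x)-\overline u_{\Omega_5}|$ admits no pointwise bound that is small. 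Your proposed rescue --- infer a uniform bound on $\|h\|_{L^\infty(\Omega_\sigma)}$ from the energy normalization together with $h=u=0$ on $\partial\Omega\cap B_\sigma$ --- fails in the interior case, where $\partial\Omega\cap B_\sigma$ is empty and linear functions show that $\frac{1}{|B_\sigma|}\int_{\Omega_\sigma}|Dh|^p\lesssim 1$ cannot bound $\|h\|_{L^\infty}$.

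The paper's resolution is a dichotomy on $\lambda r$. One first proves the crude Caccioppoli-type bound $\int_{\Omega_5}|Dh-Df|^p\,dx\leq C_0\int_{\Omega_6}|h|^p\,dx\leq C_0|B_6|(M/(\lambda r))^p$; if this is already $\leq\varepsilon^p$ the lemma is proved. In the remaining case $\lambda r$ is bounded by a constant depending only on the data and $\varepsilon$, and one retains the factor $(\lambda r)^\alpha$ from \eqref{S1} in the error term. The quantity $\lambda r|u-\overline u_{\Omega_5}|$ is then controlled in $L^p(\Omega_5)$ (not $L^\infty$) by routing through $h$: the piece $\lambda r|u-h|$ is small by Poincar\'e and Lemma~\ref{Lem2} (here boundedness of $\lambda r$ is essential), while $\lambda r|h-\overline h_{\Omega_5}|\leq \lambda r\,\osc_{\Omega_5}h\leq C(\lambda r)(5/\sigma)^\beta\cdot M/(\lambda r)=CM\sigma^{-\beta}$ by Proposition~\ref{LemHOL} and the maximum principle, the $\lambda r$ factors cancelling. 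Because $|u-\overline u_{\Omega_5}|$ is only controlled in this integral sense, pairing $(\lambda r|u-\overline u_{\Omega_5}|)^{\alpha p/(p-1)}$ against $|Dh|^p$ requires the higher integrability of Proposition~\ref{LemHIG}; contrary to your remark, that proposition is exactly what makes the estimate close.
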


\begin{proof}
The proof will be divided into two cases.

\medskip

$\mathbf{Case\ 1:}\ 1<p<2.$
We first prove the following inequality:
\begin{align}\label{45}
\int_{\Omega_5} |Dh-Df|^p dx \leq C_0\int_{\Omega_6} |h|^p dx
\end{align}
where $C_0$ depends only on $\gamma,$ $n$ and $p.$

Let $\eta\in C^\infty_0(B_6)$ be a cut-off function with the properties $0\leq\eta\leq1,$ $\eta\equiv1$ on $B_5$ and $|D\eta|\leq 2.$ Taking $\eta^ph$ as a test function for the equation \eqref{HP}, we have
\begin{align*}
\int_{\Omega_6} \langle \ba(x,u,Dh)-\ba(x,u,0),\eta^pDh\rangle\; dx &= \int_{\Omega_6} \langle \ba(x,u,Dh),\eta^pDh\rangle\; dx\\
&= -p\int_{\Omega_6} \langle \ba(x,u,Dh),\eta^{p-1}hD\eta\rangle\; dx\\
&\leq \gamma^{-1}p\int_{\Omega_6} \eta^{p-1}|h||Dh|^{p-1}|D\eta|\; dx\\
&\leq \tau\int_{\Omega_6} \eta^p|Dh|^p\; dx + C(\tau) \int_{\Omega_6} |h|^p|D\eta|^p\; dx
\end{align*}
as consequence of the Young inequality with $\tau>0.$ By Lemma~\ref{Lem1}, we have
\begin{align*}
\int_{\Omega_6} \eta^p|Dh|^p\; dx &\leq \frac{1}{4}\int_{\Omega_6} \eta^p|Dh|^p\; dx + C\int_{\Omega_6} \langle \ba(x,u,Dh)-\ba(x,u,0),\eta^pDh\rangle\; dx\\
&\leq \frac{1}{2}\int_{\Omega_6} \eta^p|Dh|^p\; dx + C\int_{\Omega_6} |h|^p|D\eta|^p\; dx,
\end{align*}
and so
\begin{align}\label{46}
\int_{\Omega_5} |Dh|^p\; dx \leq C\int_{\Omega_6} |h|^p\; dx.
\end{align}

Further on, taking $h-f$ as a test function for \eqref{HP} and \eqref{FP}, we obtain
 \begin{align}\label{47}
\int_{\Omega_6} \langle \ba(x,u,Dh),Dh-Df \rangle\; dx = \int_{\Omega_6} \langle \ba(x,\overline{u}_{\Omega_5},Df),Dh-Df \rangle\; dx.
\end{align}
In view of Lemma~\ref{Lem1} with $\eta\equiv 1,$ \eqref{S} and the Young inequality, we obtain that
\begin{align*}
\int_{\Omega_5} |Dh-&Df|^p dx\\
&\leq C\int_{\Omega_5} |Dh|^p dx + C\int_{\Omega_5} \langle \ba(x,\overline{u}_{\Omega_5},Dh)-\ba(x,\overline{u}_{\Omega_5},Df),Dh-Df\rangle dx\\
&= C\int_{\Omega_5} |Dh|^p dx + C\int_{\Omega_5} \langle \ba(x,\overline{u}_{\Omega_5},Dh)-\ba(x,u,Dh),Dh-Df\rangle dx \\
&\leq C\int_{\Omega_5} |Dh|^p dx + C\int_{\Omega_5}|Dh|^{p-1}|Dh-Df| dx\\
&\leq \frac{1}{2}\int_{\Omega_5} |Dh-Df|^p dx + C\int_{\Omega_5} |Dh|^p dx.
\end{align*}
Thus, the claim \eqref{45} follows
by \eqref{46}.

Recalling $\|u\|_{L^\infty(\Omega_6)}\leq \frac{M}{\lambda r},$ the maximum principle implies $\|h\|_{L^\infty(\Omega_6)}\leq \frac{M}{\lambda r}.$ Therefore, \eqref{45} yields
$$
\int_{\Omega_5} |Dh-Df|^p\; dx \leq C_0\int_{\Omega_6} |h|^p\; dx \leq C_0|B_6|\left(\frac{M}{\lambda r}\right)^p.
$$

If $C_0|B_6|\left(\frac{M}{\lambda r}\right)^p \leq \varepsilon^p,$ then we get the conclusion.

So, assume alternatively that $C_0|B_6|\left(\frac{M}{\lambda r}\right)^p > \varepsilon^p.$
In view of \eqref{S1}, \eqref{47} and Lemma~\ref{Lem1}, we have
\begin{align*}
\int_{\Omega_5} |Dh-&Df|^p dx\\
&\leq \frac{\tau}{2}\int_{\Omega_5} |Dh|^p dx + C(\tau)\int_{\Omega_5} \langle \ba(x,\overline{u}_{\Omega_5},Dh)-\ba(x,\overline{u}_{\Omega_5},Df),Dh-Df\rangle dx\\
&= \frac{\tau}{2}\int_{\Omega_5} |Dh|^p dx + C(\tau)\int_{\Omega_5} \langle \ba(x,\overline{u}_{\Omega_5},Dh)-\ba(x,u,Dh),Dh-Df\rangle dx \\
&\leq \frac{\tau}{2}\int_{\Omega_5} |Dh|^p dx + C(\tau)\int_{\Omega_5} (\lambda r)^\alpha|u-\overline{u}_{\Omega_5}|^\alpha|Dh|^{p-1}|Dh-Df| dx.
\end{align*}
The Young inequality gives
\begin{align*}
\int_{\Omega_5} &|Dh-Df|^p dx\\
&\leq \frac{\tau}{2}\int_{\Omega_5} |Dh|^p dx + C(\tau)\int_{\Omega_5} (\lambda r)^\frac{\alpha p}{p-1}|u-\overline{u}_{\Omega_5}|^\frac{\alpha p}{p-1}|Dh|^{p} dx + \frac{1}{2} \int_{\Omega_5} |Dh-Df|^p dx,
\end{align*}
and this implies
\begin{align}\label{48}
\int_{\Omega_5} |Dh-Df|^p dx  \leq \tau\int_{\Omega_5} |Dh|^p dx + C(\tau)\int_{\Omega_5} (\lambda r|u-\overline{u}_{\Omega_5}|)^\frac{\alpha p}{p-1}|Dh|^{p} dx.
\end{align}

To estimate the second term in the above inequality, we first take constants $\alpha_0$ and $p_1$ such that
$$
0<\alpha_0<\min\{\alpha,\ p-1\},\quad p<p_1<p_0, \quad \mathrm{and}\quad p_1=\frac{p(p-1)}{p-\alpha_0-1},
$$
where $\alpha$ is given in \eqref{S1} and $p_0$ is as in Proposition~\ref{LemHIG}.
We then use the H\"older inequality, \eqref{31} and Proposition~\ref{LemHIG}, to find that
\begin{align*}
\int_{\Omega_5} &(\lambda r|u-\overline{u}_{\Omega_5}|)^\frac{\alpha p}{p-1}|Dh|^{p} dx\\
 &\leq \left(\int_{\Omega_5} (\lambda r|u-\overline{u}_{\Omega_5}|)^\frac{\alpha pp_1}{(p-1)(p_1-p)} dx\right)^\frac{p_1-p}{p_1}\left(\int_{\Omega_5}|Dh|^{p_1} dx\right)^\frac{p}{p_1}\\
&\leq (2M)^\frac{(\alpha-\alpha_0)p}{p-1}\left(\int_{\Omega_5} (\lambda r|u-\overline{u}_{\Omega_5}|)^\frac{\alpha_0 pp_1}{(p-1)(p_1-p)} dx\right)^\frac{p_1-p}{p_1}\left(\int_{\Omega_5}|Dh|^{p_1} dx\right)^\frac{p}{p_1}\\
&\leq C \left(\int_{B_5} (\lambda r|u-\overline{u}_{\Omega_5}|)^\frac{\alpha_0 pp_1}{(p-1)(p_1-p)} dx\right)^\frac{p_1-p}{p_1}\int_{\Omega_6}|Dh|^{p} dx\\
&= C \left(\int_{\Omega_5} (\lambda r|u-\overline{u}_{\Omega_5}|)^p dx\right)^\frac{p_1-p}{p_1}\int_{\Omega_6}|Dh|^{p} dx,
\end{align*}
and then by \eqref{48}, we have
\begin{align}\label{49}
\int_{\Omega_5} |Dh-Df|^p dx  \leq \Bigg[\tau+C(\tau)\bigg(\underbrace{\int_{\Omega_5} (\lambda r|u-\overline{u}_{\Omega_5}|)^p dx}_{I_1}\bigg)^\frac{p_1-p}{p_1}\Bigg]\underbrace{\int_{\Omega_6}|Dh|^{p} dx}_{I_2}.
\end{align}

It follows from the triangle inequality that
\begin{align*}
I_1=\int_{\Omega_5} (\lambda r|u-\overline{u}_{\Omega_5}|)^p dx \leq&\ C \int_{\Omega_5} (\lambda r|u-h|)^p dx+C\int_{\Omega_5} (\lambda r|h-\overline{h}_{\Omega_5}|)^p dx\\
&+ C \int_{\Omega_5} (\lambda r|\overline{h}_{\Omega_5}-\overline{u}_{\Omega_5}|)^p dx\\
\leq&\ C \underbrace{\int_{\Omega_5} (\lambda r|u-h|)^p dx}_{I_3}+C\underbrace{\int_{\Omega_5} (\lambda r|h-\overline{h}_{\Omega_5}|)^p dx}_{I_4}.
\end{align*}
Remembering that $C_0|B_6|\left(\frac{M}{\varepsilon}\right)^p > (\lambda r)^p$ and using the Poincar\'e inequality and Lemma~\ref{Lem2}, we have
\begin{align*}
I_3=\int_{\Omega_5} (\lambda r|u-h|)^p dx &\leq C\int_{\Omega_5} (\lambda r|Du-Dh|)^p dx \\
&\leq C \left(\frac{1}{\varepsilon}\right)^p\int_{\Omega_5} |Du-Dh|^p dx \leq C \left(\frac{\varepsilon_1}{\varepsilon}\right)^p.
\end{align*}
On the other hand, Proposition~\ref{LemHOL} yields
\begin{align*}
I_4=\int_{\Omega_5} (\lambda r|h-\overline{h}_{\Omega_5}|)^p dx \leq C(\lambda r)^p\left(\frac{5}{\sigma}\right)^{p\beta}\|h\|^p_{L^\infty(B_\sigma)} \leq \frac{C}{\sigma^{p\beta}},
\end{align*}
because of $\|h\|_{L^\infty(B_\sigma)}\leq \frac{M}{\lambda r}.$ Consequently,
\begin{align*}
I_1 \leq C\left(\left(\frac{\varepsilon_1}{\varepsilon}\right)^p+\frac{1}{\sigma^{p\beta}}\right).
\end{align*}

Further on, using Lemma~\ref{Lem2} and \eqref{31}, we find
$$
I_2=\int_{\Omega_6}|Dh|^{p} dx \leq C\left(\int_{\Omega_6}|Du-Dh|^{p} dx + \int_{\Omega_6}|Du|^{p} dx\right) \leq C(\varepsilon_1^p+1) \leq C_1,
$$
while \eqref{49} gives
\begin{align*}
\int_{\Omega_5} |Dh-Df|^p dx &\leq C_1\left(\tau+C_2(\tau)\left(\left(\frac{\varepsilon_1}{\varepsilon}\right)^p+\frac{1}{\sigma^{p\beta}}\right)^\frac{p_1-p}{p_1}\right)\\
&\leq C_1\tau+C_1C_2(\tau)\left(\frac{\varepsilon_1}{\varepsilon}\right)^\frac{p(p_1-p)}{p_1}+C_1C_2(\tau)\left(\frac{1}{\sigma}\right)^\frac{p\beta(p_1-p)}{p_1}.
\end{align*}
Taking $\tau,$ $\varepsilon_1$ sufficiently small and $\sigma$ sufficiently large such that
$$C_1 \tau = \frac{\varepsilon^p}{3},\quad C_1C_2(\tau)\left(\frac{\varepsilon_1}{\varepsilon}\right)^\frac{p(p_1-p)}{p_1}\leq \frac{\varepsilon^p}{3} \quad \mathrm{and}\quad C_1C_2(\tau)\left(\frac{1}{\sigma}\right)^\frac{p\beta(p_1-p)}{p_1} \leq \frac{\varepsilon^p}{3},$$
we get the claim.

\medskip

$\mathbf{Case\ 2:}\ p\geq2.$
By using \eqref{Mo} instead of Lemma~\ref{Lem1} in the above proof,  we can obtain the conclusion in a similar manner.
\end{proof}

\begin{lemma}\label{Lem4}
Under the hypotheses of Lemma $\ref{Lem3},$ we further assume \eqref{33} and
 \eqref{34}. Then there exists a weak solution $v\in W^{1,p}(U)$ of \eqref{VP} such that
\begin{align*}
\|D\bar{v}\|_{L^\infty(\Omega_3)}\leq N_0\quad \mathrm{and}\quad\int_{\Omega_4} |Df-D\bar{v}|^p dx \leq \varepsilon^p
\end{align*}
for some constant $N_0=N_0(\gamma,n,p)>1.$ Here, the function $\bar{v}\in W^{1,p}(\Omega_4)$ is equal to
$v$ if $U=B_4,$ and $\bar{v}$ is the zero extension of $v$ from $B_4^+$ to $B_4$ if $U=B_4^+.$
\end{lemma}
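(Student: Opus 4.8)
The plan is to compare the weak solution $f$ of \eqref{FP} with a weak solution $v$ of the homogeneous, frozen-coefficient problem \eqref{VP} by passing through two intermediate steps: first replacing the $x$-dependent nonlinearity $\ba(x,\overline{u}_{\Omega_5},\cdot)$ in $\Omega_5$ by its average $\mathbf{A}(\cdot)$ over $U$, which uses the $(\delta,R)$-vanishing hypothesis \eqref{33}; and second, upgrading from the Reifenberg-flat domain $\Omega_5$ to the half-ball $B_4^+$ (or full ball $B_4$ in the interior case), which uses the flatness \eqref{34}. This is the classical two-step comparison from Caffarelli--Peral as adapted to the quasilinear setting in \cite{BW2,BR}, so I would cite those works heavily and mainly indicate how the ingredients already proved in this section feed in.

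The key steps, in order: (i) Use the monotonicity \eqref{Mo} (for $p\geq2$) or Lemma~\ref{Lem1} (for $1<p<2$), testing the equations \eqref{FP} and \eqref{VP} against $f-\bar v$, to bound $\int_{\Omega_4}|Df-D\bar v|^p$ by the ``oscillation'' term $\int_{\Omega_4}\frac{|\ba(x,\overline{u}_{\Omega_5},Df)-\mathbf{A}(Df)|^p}{|Df|^{p(p-1)/\cdots}}$-type quantity plus boundary-layer contributions near $\partial\Omega_4\setminus\{x_n=0\}$. (ii) Control the oscillation term: by definition, $|\ba(x,\overline{u}_{\Omega_5},\xi)-\mathbf{A}(\xi)|\le \Theta(\ba;U)(x,\overline{u}_{\Omega_5})|\xi|^{p-1}$ plus a term coming from replacing the average over $\Omega_5$ by the average over $U$, and $\int_U\Theta\,dx\le C\delta$ by \eqref{33} together with \eqref{34} (the measures $|\Omega_5|$ and $|U|$ differ by $O(\delta)$). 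Here one needs the higher integrability of $Df$ — which follows from Proposition~\ref{LemHIG} applied to $f$ (the equation \eqref{FP} has the same structure as \eqref{HP}) — together with an $L^p$ bound $\int_{\Omega_6}|Df|^p\le C$ coming from \eqref{46}, Lemma~\ref{Lem2}, \eqref{32} and \eqref{34}, so that a Hölder inequality converts the small $L^1$-norm of $\Theta$ into a small factor $\delta^{\text{something}}$. (iii) The boundary-layer term is handled by \eqref{34}: $\Omega_5\setminus B_5^+$ has measure $O(\delta)$, so extending $v$ by zero and using the uniform bounds on $\|Df\|_{L^p}$ gives a contribution $O(\delta^{1/\cdots})$. (iv) Finally invoke the interior/boundary Lipschitz estimate for the homogeneous equation $\mathrm{div\,}\mathbf{A}(Dv)=0$ with constant-in-$x$ nonlinearity satisfying \eqref{S}: this is standard (e.g.\ \cite{T,G} or \cite{BW2}) and yields $\|Dv\|_{L^\infty(U')}\le N_0$ on the relevant smaller domain, where $N_0$ depends only on $\gamma,n,p$; combined with the fact that $\|Dv\|_{L^p(U)}$ is controlled (it is comparable to $\|Df\|_{L^p}$), one gets $\|D\bar v\|_{L^\infty(\Omega_3)}\le N_0$, absorbing into $\Omega_3$ the error introduced near the flat boundary part.

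The main obstacle I expect is Step (ii) combined with the need to make \emph{all} the constants depend only on the indicated parameters while extracting an honest smallness $O(\varepsilon^p)$ out of $\delta$. The delicate points are: the self-improving higher integrability exponent for $f$ must be pinned down uniformly (so it only depends on $\gamma,n,p$), the Hölder-exponent bookkeeping in converting $\|\Theta\|_{L^1(U)}\le C\delta$ into a power of $\delta$ must be compatible with that fixed higher-integrability gap, and one must be careful that $\overline{u}_{\Omega_5}$ lies in the compact interval $[-M/(\lambda r),M/(\lambda r)]$ over which \eqref{33} is posed — but since the relevant scaled $(\delta,R)$-vanishing condition in Section~\ref{Sec3} is exactly stated on $[-M/(\lambda r),M/(\lambda r)]$, this is consistent. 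Once these are arranged, choosing $\delta$ small (depending on $\gamma,\alpha,n,p,\Gamma,M,\varepsilon$, and also on the already-fixed $\sigma$ from Lemma~\ref{Lem3}) makes both the oscillation term and the boundary-layer term at most $\varepsilon^p/2$, and the triangle inequality closes the proof. The $1<p<2$ case is handled exactly as in Lemma~\ref{Lem3}, replacing the use of \eqref{Mo} by Lemma~\ref{Lem1} and a Young-inequality absorption, so I would just remark that the argument is parallel.
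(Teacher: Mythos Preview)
Your proposal is correct and follows essentially the same route as the paper: the paper's own proof simply establishes the bound $\int_{\Omega_5}|Df|^p\,dx\le C$ (via the triangle inequality together with Lemmas~\ref{Lem2}, \ref{Lem3} and \eqref{31}, rather than via \eqref{46} as you suggest---note $f$ is only defined on $\Omega_5$, not $\Omega_6$) and then defers the entire comparison with $\bar v$ to the standard perturbation argument of \cite[Lemmas~3.1 and~3.7]{BR}, which is precisely the two-step ``freeze coefficients / flatten boundary'' scheme you have sketched in detail.
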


\begin{proof}
According to Lemma~\ref{Lem2}, Lemma~\ref{Lem3} and \eqref{31}, it follows from the triangle inequality that
$$
\int_{\Omega_5} |Df|^p \ dx \leq C \int_{\Omega_5} |Du|^p + |Du-Dh|^p + |Dh-Df|^p  dx \leq C,
$$
where $C$ depends only on $n$ and $p.$ Then we proceed in doing a comparison estimate from standard perturbation argument, as in Lemma~3.1 and Lemma~3.7 of \cite{BR}, in order to obtain the desired conclusion.
\end{proof}

\begin{proof}[Proof of Lemma \ref{MainLem}]
The proof follows directly from the triangle inequality and Lemmas~\ref{Lem2}, \ref{Lem3} and \ref{Lem4}.
\end{proof}

\section{Global gradient estimates}\label{Sec5}

This section is devoted to the proof of the main result, Theorem~\ref{Thm1}. We start with a modified Vitali covering lemma for the problem \eqref{1}.

\begin{proposition}
{\em (see \cite{BW1, NP})}
\label{prop1}
Let $\mathcal{C}$ and $\mathcal{D}$ be measurable sets with $\mathcal{C}\subset \mathcal{D}\subset \Omega.$ Assume that $\Omega$ is $(\delta,R)$-Reifenberg flat.  Suppose that there exist $0< \varepsilon<1$ and $\sigma >1$ for which
\begin{enumerate}
\item $|\mathcal{C}| <\varepsilon |B_{R/\sigma}|;$
\item for all $x\in \Omega$ and $r\in (0,\frac{R}{\sigma}]$ with $|\mathcal{C}\cap B_r(x)|\geq \varepsilon|B_r(x)|,$ there holds $\Omega\cap B_r(x)\subset \mathcal{D}.$
\end{enumerate}
Then we have
$$|\mathcal{C}|\leq \left(\frac{10}{1-\delta}\right)^n\varepsilon|\mathcal{D}|.$$
\end{proposition}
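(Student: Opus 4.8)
The plan is to run the classical exit-radius plus Vitali covering argument, in which the only genuinely geometric ingredient is a measure density bound for Reifenberg flat domains. First I would replace $\mathcal{C}$ by its set of Lebesgue density points; this leaves $|\mathcal{C}|$ unchanged and lets me assume $\lim_{r\to 0}|\mathcal{C}\cap B_r(x)|/|B_r(x)|=1$ for every $x\in\mathcal{C}\subset\Omega$. The map $r\mapsto |\mathcal{C}\cap B_r(x)|$ is continuous (spheres are null), while hypothesis $(1)$ gives $|\mathcal{C}\cap B_r(x)|\le |\mathcal{C}|<\varepsilon|B_{R/\sigma}|\le \varepsilon|B_r(x)|$ for all $r\ge R/\sigma$; since the density tends to $1>\varepsilon$ as $r\to0$, for each such $x$ there is a well-defined exit radius
$$
r_x:=\sup\Big\{r\in(0,R/\sigma]:\ |\mathcal{C}\cap B_r(x)|\ge\varepsilon|B_r(x)|\Big\}\in(0,R/\sigma),
$$
satisfying $|\mathcal{C}\cap B_{r_x}(x)|=\varepsilon|B_{r_x}(x)|$ and, by the very definition of the supremum together with the smallness of $|\mathcal{C}|$, $|\mathcal{C}\cap B_r(x)|<\varepsilon|B_r(x)|$ for \emph{every} $r>r_x$ — in particular for $r=5r_x$.

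Next I would apply the $5r$-covering lemma to the family $\{B_{r_x}(x)\}_{x\in\mathcal{C}}$, whose radii are bounded by $R/\sigma$, extracting a countable disjoint subfamily $\{B_{r_i}(x_i)\}_i$ with $x_i\in\mathcal{C}\subset\Omega$ and $\mathcal{C}\subset\bigcup_i B_{5r_i}(x_i)$. The previous paragraph then yields
$$
|\mathcal{C}|\le\sum_i|\mathcal{C}\cap B_{5r_i}(x_i)|\le\varepsilon\sum_i|B_{5r_i}(x_i)|=\varepsilon\,5^{n}\sum_i|B_{r_i}(x_i)|.
$$
Now I invoke hypothesis $(2)$: since $|\mathcal{C}\cap B_{r_i}(x_i)|=\varepsilon|B_{r_i}(x_i)|\ge \varepsilon|B_{r_i}(x_i)|$, $x_i\in\Omega$ and $r_i< R/\sigma$, we get $\Omega\cap B_{r_i}(x_i)\subset\mathcal{D}$, hence $|\Omega\cap B_{r_i}(x_i)|\le|\mathcal{D}\cap B_{r_i}(x_i)|$. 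Finally, the $(\delta,R)$-Reifenberg flatness of $\Omega$ gives the lower measure density estimate $|\Omega\cap B_{r_i}(x_i)|\ge\big(\tfrac{1-\delta}{2}\big)^{n}|B_{r_i}(x_i)|$; combining this with the disjointness of the $B_{r_i}(x_i)$,
$$
\varepsilon\,5^{n}\sum_i|B_{r_i}(x_i)|\le\varepsilon\,5^{n}\Big(\tfrac{2}{1-\delta}\Big)^{n}\sum_i|\Omega\cap B_{r_i}(x_i)|\le\varepsilon\Big(\tfrac{10}{1-\delta}\Big)^{n}\sum_i|\mathcal{D}\cap B_{r_i}(x_i)|\le\varepsilon\Big(\tfrac{10}{1-\delta}\Big)^{n}|\mathcal{D}|,
$$
which is the claimed bound.

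The main obstacle is the measure density estimate in the last step, and in particular making it hold with a single constant for all the selected balls irrespective of their position relative to $\partial\Omega$. When $B_{r_i}(x_i)\subset\Omega$ it is trivial (the constant being $1$), but for the near-boundary balls one must re-center at a boundary point close to $x_i$ and use the two-sided hyperplane sandwiching of Definition~\ref{defin2.1} to trap a full ball of comparable radius inside $\Omega$; this is exactly the place where $(\delta,R)$-Reifenberg flatness, rather than mere openness of $\Omega$, is needed (cf.\ \cite{BW1, NP}). The remaining ingredients — continuity of $r\mapsto|\mathcal{C}\cap B_r(x)|$, the $5r$-covering lemma, and the two-sided control of the density above and below the exit radius — are standard.
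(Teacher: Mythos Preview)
The paper does not give its own proof of Proposition~\ref{prop1}; it simply quotes the statement and refers to \cite{BW1, NP}. Your argument is the standard one found in those references: exit radius, Vitali $5r$-covering, and the Reifenberg measure-density bound $|\Omega\cap B_r(x)|\ge\big(\tfrac{1-\delta}{2}\big)^n|B_r(x)|$ for $x\in\Omega$, $0<r<R$, which indeed produces exactly the constant $(10/(1-\delta))^n$. The only place to tighten the write-up is the claim that $|\mathcal{C}\cap B_r(x)|<\varepsilon|B_r(x)|$ for \emph{all} $r>r_x$: for $r_x<r\le R/\sigma$ this follows from the definition of the supremum, and for $r>R/\sigma$ you correctly invoke hypothesis~(1), so the case $5r_x>R/\sigma$ is covered; it would just help the reader to make that split explicit.
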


We now return to the scaled and normalized problem \eqref{UP}.

\begin{lemma}\label{Lem10}
Assume that $\widetilde{\ba}$ satisfies \eqref{S} and \eqref{S1}. Let $\widetilde{u}\in W^{1,p}(\widetilde{\Omega})$ be a bounded weak solution of \eqref{UP} with $\|\widetilde{u}\|_{L^\infty(\widetilde{\Omega})}\leq \frac{M}{\lambda r}.$ Then there exists a constant $N_1=N_1(\gamma, n, p)>1$ so that for any small $\varepsilon\in(0,1),$ there exist a small constant $\delta=\delta(\gamma, \alpha, n, p, \Gamma, M, \varepsilon)>0$ and a large constant $\sigma=\sigma(\gamma, \alpha, n, p, \Gamma, M, \varepsilon)> 6$ such that if $\widetilde{\ba}(x,z,\xi)$ is $(\delta, \sigma)$-vanishing and $\widetilde{\Omega}$ is $(\delta, \sigma)$-Reifenberg flat, and if
\begin{equation}
\label{51}
\left\{x\in\widetilde{\Omega}_1:\m(|D\widetilde{u}|^p) \leq \left(\frac{6}{7}\right)^n\right\}
\cap \left\{x\in\widetilde{\Omega}_1:\m(|\widetilde{F}|^p)\leq\left(\frac{6}{7}\right)^n \delta^p\right\}\neq\varnothing
\end{equation}
then
$$
\left|\left\{x\in\widetilde{\Omega}_1: \m(|D\widetilde{u}|^p)(x)>\left(\frac{6}{7}\right)^n N_1^p\right\}\right|<\varepsilon|B_1|.
$$
\end{lemma}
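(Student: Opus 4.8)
The plan is to run the standard ``density/level-set'' argument of the Calder\'on--Zygmund machinery, feeding it with the comparison estimate of Lemma~\ref{MainLem}. First I would exploit the nonemptiness hypothesis \eqref{51}: fix a point $x_*\in\widetilde\Omega_1$ at which both $\m(|D\widetilde u|^p)(x_*)\le(6/7)^n$ and $\m(|\widetilde F|^p)(x_*)\le(6/7)^n\delta^p$. Since $x_*\in\widetilde\Omega_1$ and $\sigma>6$, every ball $B_\rho(x_*)$ with $\rho\le\sigma$ contains $\widetilde\Omega_\sigma$ up to a controlled dilation, so from the definition of the maximal function one reads off exactly the normalization bounds \eqref{31} and \eqref{32}, namely $\frac{1}{|B_\sigma|}\int_{\widetilde\Omega_\sigma}|D\widetilde u|^p\,dx\le1$, $\frac{1}{|B_6|}\int_{\widetilde\Omega_6}|D\widetilde u|^p\,dx\le1$, and $\frac{1}{|B_\sigma|}\int_{\widetilde\Omega_\sigma}|\widetilde F|^p\,dx\le\delta^p$ (after a harmless translation of $\widetilde\Omega_\sigma$ to be centered appropriately; one should be slightly careful that the relevant balls are centered at a boundary point or an interior point, splitting into the two cases of Lemma~\ref{MainLem}). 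The hypotheses \eqref{33}, \eqref{34} are then consequences of the $(\delta,\sigma)$-vanishing property of $\widetilde\ba$ and the $(\delta,\sigma)$-Reifenberg flatness of $\widetilde\Omega$, together with $\|\widetilde u\|_{L^\infty}\le M/(\lambda r)$.

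Next I would invoke Lemma~\ref{MainLem} with $\varepsilon$ to be chosen: it produces a weak solution $v$ of \eqref{VP} whose zero-extension $\bar v$ satisfies $\|D\bar v\|_{L^\infty(\widetilde\Omega_3)}\le N_0$ and $\int_{\widetilde\Omega_4}|D\widetilde u-D\bar v|^p\,dx\le\varepsilon^p$. The pointwise bound on $D\bar v$ on $\widetilde\Omega_3$ is what lets one control $\m_{\widetilde\Omega_4}(|D\bar v|^p)$ on $\widetilde\Omega_1$ by a dimensional constant. Setting $N_1:=\max\{N_0,\text{(dimensional constant from the weak-type }(1,1)\text{ bound)}\}\cdot(\text{const})$, one splits
$$
\m(|D\widetilde u|^p)\le C_n\big(\m_{\widetilde\Omega_4}(|D\widetilde u-D\bar v|^p)+\m_{\widetilde\Omega_4}(|D\bar v|^p)\big)+\m_{\widetilde\Omega\setminus\widetilde\Omega_4}(|D\widetilde u|^p)
$$
on $\widetilde\Omega_1$, where the last ``far-away'' piece is again bounded by $(6/7)^n$-type quantities coming from $x_*$ and hence is harmless. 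On the set where $\m(|D\widetilde u|^p)>(6/7)^nN_1^p$ the middle term is too small to contribute, so that set is contained (up to the harmless far term) in $\{x\in\widetilde\Omega_1:\m_{\widetilde\Omega_4}(|D\widetilde u-D\bar v|^p)>c_n N_1^p\}$.

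Finally I would apply the weak-type $(1,1)$ estimate for the Hardy--Littlewood maximal operator to this last set:
$$
\Big|\Big\{x\in\widetilde\Omega_1:\m_{\widetilde\Omega_4}(|D\widetilde u-D\bar v|^p)>c_nN_1^p\Big\}\Big|\le\frac{C_n}{c_nN_1^p}\int_{\widetilde\Omega_4}|D\widetilde u-D\bar v|^p\,dx\le\frac{C_n}{c_nN_1^p}\,\varepsilon^p.
$$
Since $N_1$ depends only on $\gamma,n,p$, choosing $\varepsilon$ small enough (which in turn fixes $\delta$ and $\sigma$ via Lemma~\ref{MainLem}) makes the right-hand side $<\varepsilon|B_1|$, completing the proof. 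The main obstacle I anticipate is bookkeeping rather than anything deep: carefully tracking the geometric constants when passing between balls of radii $1,3,4,5,6,\sigma$ in both the interior and boundary cases, making sure the ``far-away'' contribution of the maximal function genuinely stays below the $(6/7)^nN_1^p$ threshold so it can be discarded, and verifying that the $N_1$ produced here is independent of $\varepsilon$ (it must be, since it is only the post-comparison, honestly-$p$-Laplacian-type solution $v$ that enters), so that the $\varepsilon$ in the weak-type bound can be absorbed against the $\varepsilon|B_1|$ on the right.
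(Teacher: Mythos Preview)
Your proposal is correct and follows essentially the same approach as the paper: extract the normalization \eqref{31}--\eqref{32} from the point $x_*$ given by \eqref{51}, apply Lemma~\ref{MainLem}, and conclude via the weak-$(1,1)$ estimate for $\m_{\widetilde\Omega_4}(|D\widetilde u-D\bar v|^p)$. The only cosmetic difference is that the paper organizes the maximal-function splitting as a contrapositive set inclusion (treating the cases $\rho\le 2$ and $\rho>2$ separately, which cleanly confines the use of $\|D\bar v\|_{L^\infty(\widetilde\Omega_3)}$ to balls contained in $\widetilde\Omega_3$) rather than your additive decomposition; your ``far-away'' term is exactly their $\rho>2$ case.
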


\begin{proof}
Remembering \eqref{51}, there is a point $\widetilde{x}\in \widetilde{\Omega}_1$ such that for all $\rho>0,$
\begin{equation}\label{52}
\frac{1}{|B_\rho|}\int_{\widetilde{\Omega}_\rho(\widetilde{x})} |D\widetilde{u}|^p dx \leq \left(\frac{6}{7}\right)^n\quad\mathrm{and} \quad \frac{1}{|B_\rho|}\int_{\widetilde{\Omega}_\rho(\widetilde{x})} |\widetilde{F}|^p dx \leq \left(\frac{6}{7}\right)^n\delta^p.
\end{equation}
Let $\sigma>6.$ Since $\widetilde{\Omega}_\sigma \subset\widetilde{\Omega}_{\sigma+1}(\widetilde{x}),$
we have
$$
\frac{1}{|B_\sigma|}\int_{\widetilde{\Omega}_\sigma}|\widetilde{F}|^p\; dx \leq \left(\frac{7}{6}\right)^n\frac{1}{|B_{\sigma+1}|}\int_{\widetilde{\Omega}_{\sigma+1}(\widetilde{x})}|\widetilde{F}|^p\; dx \leq \delta^p.
$$
Similarly, it follows that
$$
\frac{1}{|B_\sigma|}\int_{\widetilde{\Omega}_\sigma}|D\widetilde{u}|^p\; dx \leq 1 \quad \mathrm{and}\quad \frac{1}{|B_6|}\int_{\widetilde{\Omega}_6}|D\widetilde{u}|^p\; dx \leq 1.
$$
Thus, we are under the hypotheses of 
Lemma~\ref{MainLem}, which implies that there exist a big constant $\sigma=\sigma(\gamma,\alpha, n,p,\Gamma,M,\varepsilon)> 6$ and a small constant $\delta=\delta(\gamma,\alpha, n,p,\Gamma,M,\varepsilon)>0$ such that the conclusion of Lemma \ref{MainLem}
holds for such $\bar{v}$ and $N_0.$

Further on, we will show that there exists a constant $N_1=N_1(\gamma,n,p)>1$ such that
\begin{equation}\label{53}
\left\{ y\in \widetilde{\Omega}_1:\m(|D\widetilde{u}|^p)>\left(\frac{6}{7}\right)^n N_1^p\right\}
\subset \left\{y\in \widetilde{\Omega}_1:\m_{\widetilde{\Omega}_4}(|D\widetilde{u}-D\bar{v}|^p)>N_0^p\right\}.
\end{equation}
To do this, let $\widetilde{y}\in\{y\in \widetilde{\Omega}_1:\m_{\widetilde{\Omega}_4}(|D\widetilde{u}-D\bar{v}|^p)\leq N_0^p\}.$ Then
$$\frac{1}{|B_\rho|}\int_{B_\rho(\widetilde{y})} \chi_{\widetilde{\Omega}_4}|D\widetilde{u}-D\bar{v}|^p\; dx \leq N_0^p $$
for any $\rho>0.$ If $\rho>2,$ then $\widetilde{\Omega}_\rho(\widetilde{y})\subset \widetilde{\Omega}_{2\rho}(\widetilde{x})$ and it follows from \eqref{52} that
\begin{align*}
\frac{1}{|B_\rho|}\int_{\widetilde{\Omega}_\rho(\widetilde{y})} |D\widetilde{u}|^p\; dx \leq \frac{1}{|B_\rho|} \int_{\widetilde{\Omega}_{2\rho}(\widetilde{x})} |D\widetilde{u}|^p\; dx \leq 2^n.
\end{align*}
On the other hand, if $\rho\in(0,2],$ then $\widetilde{\Omega}_\rho(\widetilde{y})\subset \widetilde{\Omega}_3$ and so we have
\begin{align*}
\frac{1}{|B_\rho|}\int_{\widetilde{\Omega}_\rho(\widetilde{y})} |D\widetilde{u}|^p\; dx &\leq 2^{p-1}\frac{1}{|B_\rho|}\int_{\widetilde{\Omega}_\rho(\widetilde{y})} |D\widetilde{u}-D\bar{v}|^p+|D\bar{v}|^p\; dx \\
 &\leq 2^{p-1}N_0^p + 2^{p-1}\frac{1}{|B_\rho|}\int_{\widetilde{\Omega}_\rho(\widetilde{y})} |D\bar{v}|^p\; dx \leq (2N_0)^p.
\end{align*}
Taking $N_1^p=\left(\frac{7}{6}\right)^n\max\left\{2^n, (2N_0)^p\right\},$ the claim \eqref{53} follows.

We now use \eqref{53}, the weak $(1,1)$-estimate for the Hardy--Littlewood maximal function and Lemma \ref{MainLem}, to observe that
\begin{align*}
\left|\left\{y\in\widetilde{\Omega}_1: \m(|D\widetilde{u}|^p)>\left(\frac{6}{7}\right)^n N_1^p\right\}\right| &\leq \left|\left\{y\in \widetilde{\Omega}_1:\m_{\widetilde{\Omega}_4}(|D\widetilde{u}-D\bar{v}|^p)>N_0^p\right\}\right|\\
&\leq  \frac{C(n,p)} {N_0^p} \int_{\widetilde{\Omega}_4} |D\widetilde{u}-D\bar{v}|^2\; dx \leq C \varepsilon^p|B_1|.
\end{align*}
Thus, the claim follows in view of the arbitrariness of $\varepsilon>0.$
\end{proof}

Turning back to the problem \eqref{1}, scaling and normalization give
\begin{corollary}\label{Cor1}
Assume that $\ba$ satisfies \eqref{3} and \eqref{4}. Let $u\in W^{1,p}(\Omega)$ be a bounded weak solution of \eqref{1} with $\|u\|_{L^\infty(\Omega)}\leq M.$ Then for any small constant $\varepsilon\in(0,1),$ there exist a small constant $\delta=\delta(\gamma,\alpha, n,p,\Gamma,M,\varepsilon)>0$ and a big constant $\sigma=\sigma(\gamma, \alpha, n, p, \Gamma, M, \varepsilon)> 6$ such that if $\ba(x,z,\xi)$ is $(\delta, R)$-vanishing and $\Omega$ is $(\delta, R)$-Reifenberg flat, and if
$$
\left\{x\in\Omega:\m(|Du|^p) \leq \left(\frac{6}{7}\right)^n\lambda^p\right\} \cap\left\{x\in\Omega:\m(|F|^p)\leq \left(\frac{6}{7}\right)^n \lambda^p\delta^p\right\}\neq\varnothing
$$
then
$$
\left|\left\{x\in\Omega_r(y): \m(|Du|^p)(x)>\left(\frac{6}{7}\right)^n\lambda^p N_1^p\right\}\right|<\varepsilon|B_r|
$$
for any $r\in\left(0,\frac{R}{\sigma}\right]$ and any $y\in \Omega.$
\end{corollary}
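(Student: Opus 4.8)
The statement is nothing but the un‑scaled counterpart of Lemma~\ref{Lem10}, so the plan is to obtain it by undoing the two‑parameter transformation of Section~\ref{Sec3}. Fix $\varepsilon\in(0,1)$ and let $\delta$ and $\sigma$ be the constants produced by Lemma~\ref{Lem10} for this $\varepsilon$; these will serve as the constants of the corollary. Fix $y\in\Omega$ and $r\in\left(0,\tfrac{R}{\sigma}\right]$, and apply to \eqref{1} the change of variables $x\mapsto\frac{x-y}{r}$ together with the normalization of Section~\ref{Sec3}: set $\widehat\Omega:=\left\{\frac{x-y}{r}:x\in\Omega\right\}$, $\widehat\ba(x,z,\xi):=\lambda^{-(p-1)}\ba(rx+y,\lambda r z,\lambda\xi)$, $\widehat u(x):=\frac{u(rx+y)}{\lambda r}$ and $\widehat F(x):=\frac{F(rx+y)}{\lambda}$. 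Since the conditions \eqref{3}, \eqref{4}, \eqref{5}, the Reifenberg flatness and the notion of weak solution are all invariant under translations, the computations of Section~\ref{Sec3} carry over verbatim with the additional translation by $y$.

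Accordingly, $\widehat u$ is a weak solution on $\widehat\Omega$ of the problem obtained from \eqref{UP} by replacing $\widetilde\ba,\widetilde F,\widetilde\Omega$ with $\widehat\ba,\widehat F,\widehat\Omega$; moreover $\widehat\ba$ satisfies \eqref{S} and \eqref{S1}, it is $(\delta,\tfrac{R}{r})$‑vanishing, $\widehat\Omega$ is $(\delta,\tfrac{R}{r})$‑Reifenberg flat, and $\|\widehat u\|_{L^\infty(\widehat\Omega)}\le\frac{M}{\lambda r}$. Since $r\le\tfrac{R}{\sigma}$ gives $\tfrac{R}{r}\ge\sigma$, the map $\widehat\ba$ is in particular $(\delta,\sigma)$‑vanishing and $\widehat\Omega$ is $(\delta,\sigma)$‑Reifenberg flat, so all the hypotheses of Lemma~\ref{Lem10} are in force for $\widehat u,\widehat\ba,\widehat F,\widehat\Omega$.

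To transfer the level‑set hypothesis, a change of variables in the supremum defining the Hardy--Littlewood maximal function yields $\m(|D\widehat u|^p)(x)=\lambda^{-p}\m(|Du|^p)(rx+y)$ and $\m(|\widehat F|^p)(x)=\lambda^{-p}\m(|F|^p)(rx+y)$, and the analogous identities for the truncated maximal functions $\m_{\widehat\Omega_4}$ and $\m_{\widehat\Omega_3}$ used inside Lemma~\ref{Lem10}. Hence the pair of sets whose intersection is assumed nonempty in the corollary pulls back, under $x\mapsto\frac{x-y}{r}$, precisely to the pair appearing in \eqref{51} written for $\widehat u$ and $\widehat F$; in particular the point furnished by that intersection — which, in the intended use of the corollary for checking condition (2) of Proposition~\ref{prop1}, lies in $\Omega_r(y)$ — is carried into $\widehat\Omega_1$, so that \eqref{51} holds. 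Lemma~\ref{Lem10} then provides a constant $N_1=N_1(\gamma,n,p)>1$ with $\bigl|\{x\in\widehat\Omega_1:\m(|D\widehat u|^p)(x)>(6/7)^nN_1^p\}\bigr|<\varepsilon|B_1|$. Undoing the change of variables — $\widehat\Omega_1$ is the image of $\Omega_r(y)$, the Lebesgue measure scales by $r^n$ (so $|B_1|$ becomes $|B_r|$), and $\m(|D\widehat u|^p)(x)>(6/7)^nN_1^p$ becomes $\m(|Du|^p)(rx+y)>(6/7)^n\lambda^pN_1^p$ — gives exactly the claimed bound $\bigl|\{x\in\Omega_r(y):\m(|Du|^p)(x)>(6/7)^n\lambda^pN_1^p\}\bigr|<\varepsilon|B_r|$.

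The argument is thus essentially bookkeeping, and the only point requiring care is tracking the simultaneous effect of the dilation by $1/r$ and of the normalization by $1/(\lambda r)$ on the maximal function, so that the powers of $\lambda$ and $r$ come out exactly as in the statement, together with the observation that the point supplied by the intersection hypothesis may be taken in $\Omega_r(y)$ (so that it indeed lands inside $\widehat\Omega_1$), which is precisely the situation in which the corollary is invoked in Section~\ref{Sec5}. Everything else is immediate from Lemma~\ref{Lem10} and from the scaling analysis of Section~\ref{Sec3}.
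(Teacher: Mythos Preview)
Your proposal is correct and follows exactly the approach the paper intends: the paper offers no explicit proof and simply states that the corollary follows from Lemma~\ref{Lem10} by ``scaling and normalization,'' and you have carried out precisely that computation. Your observation that the nonempty-intersection hypothesis must actually furnish a point in $\Omega_r(y)$ (rather than merely somewhere in $\Omega$) for the rescaled point to land in $\widehat\Omega_1$ and feed into \eqref{51} is well-spotted---the corollary as literally stated is slightly imprecise on this point, but as you note, in its only application (verifying hypothesis~(2) of Proposition~\ref{prop1} in the proof of Lemma~\ref{Lem11}) the witness point is indeed supplied inside $\Omega_r(y)$.
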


We now take $N_1,$ $\varepsilon$ and the corresponding $\delta$ and $\sigma$ from Corollary \ref{Cor1}.

\begin{lemma}\label{Lem11}
Assume that $\Omega$ is $(\delta,R)$-Reifenberg flat, and $\ba$ satisfies \eqref{3}, \eqref{4} and \eqref{5}. Let $F \in L^p(\Omega,\mr^n)$ and $u\in W^{1,p}_0(\Omega)$ be a bounded weak solution of \eqref{1} with $\|u\|_{L^\infty(\Omega)}\leq M.$ Then
\begin{align*}
&\left|\left\{ x\in\Omega: \m(|Du|^p) (x)> \left(\frac{6}{7}\right)^n N_1^{p(k+1)} \right\}\right|\\
& \qquad  \leq 20^n\varepsilon \left(\left|\left\{x\in\Omega:\m(|Du|^p)(x) > \left(\frac{6}{7}\right)^n N_1^{pk}\right\}\right|\right.\\
 &   \qquad\qquad\qquad +\left.\left|\left\{x\in\Omega:\m(|F|^p)(x)> \left(\frac{6}{7}\right)^n N_1^{pk}\delta^p\right\}\right|\right)
\end{align*}
for all integer $k\geq k_0,$  where $k_0$ is an integer satisfying
\begin{equation}
\label{55}
\frac{C_4}{N_1^{p(k_0+1)}}\int_\Omega |F|^p\;dx < \varepsilon|B_{R/\sigma}| \leq \frac{C_4}{N_1^{pk_0}}\int_\Omega |F|^p\;dx
\end{equation}
with $C_4=C_4(n,p,\gamma).$
\end{lemma}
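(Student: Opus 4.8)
The plan is to prove Lemma~\ref{Lem11} as an iteration built on Corollary~\ref{Cor1} via the covering Proposition~\ref{prop1}. Fix an integer $k\ge k_0$ and introduce the ``bad'' set and its companion
$$
\mathcal{C}_k:=\left\{x\in\Omega:\m(|Du|^p)(x)>\left(\tfrac{6}{7}\right)^n N_1^{p(k+1)}\right\},\qquad
\mathcal{D}_k:=\left\{x\in\Omega:\m(|Du|^p)(x)>\left(\tfrac{6}{7}\right)^n N_1^{pk}\right\}\cup\left\{x\in\Omega:\m(|F|^p)(x)>\left(\tfrac{6}{7}\right)^n N_1^{pk}\delta^p\right\}.
$$
Clearly $\mathcal{C}_k\subset\mathcal{D}_k\subset\Omega$. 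The goal is to verify the two hypotheses of Proposition~\ref{prop1} with this pair and then read off the conclusion, which is exactly the claimed inequality with the constant $20^n\varepsilon=\bigl(\tfrac{10}{1-\delta}\bigr)^n\varepsilon$ once $\delta\le 1/2$.

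First I would check hypothesis (1), the measure bound $|\mathcal{C}_k|<\varepsilon|B_{R/\sigma}|$. By the weak $(1,1)$ estimate for $\m$ applied to $|Du|^p\in L^1(\Omega)$,
$$
|\mathcal{C}_k|\le \frac{C(n,p)}{\left(\tfrac{6}{7}\right)^n N_1^{p(k+1)}}\int_\Omega |Du|^p\,dx.
$$
Since $u$ is a weak solution of \eqref{1}, testing with $u$ itself and using the ellipticity bounds \eqref{3} together with Young's inequality gives $\int_\Omega|Du|^p\,dx\le C(n,p,\gamma)\int_\Omega|F|^p\,dx$; absorbing the harmless factor $(7/6)^n$ into the constant, this produces $|\mathcal{C}_k|\le \frac{C_4}{N_1^{p(k+1)}}\int_\Omega|F|^p\,dx$ with precisely the constant $C_4=C_4(n,p,\gamma)$ fixed in \eqref{55}. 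For $k\ge k_0$ the left inequality in \eqref{55} (after replacing $k_0$ by $k$, using monotonicity of $N_1^{-p(k+1)}$ and $N_1>1$) yields $|\mathcal{C}_k|<\varepsilon|B_{R/\sigma}|$, which is (1).

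Next comes hypothesis (2): for every $x\in\Omega$ and $r\in(0,R/\sigma]$ with $|\mathcal{C}_k\cap B_r(x)|\ge\varepsilon|B_r(x)|$, one must show $\Omega\cap B_r(x)\subset\mathcal{D}_k$. I would argue by contraposition: suppose $\Omega_r(x)\not\subset\mathcal{D}_k$, i.e.\ there exists $x_1\in\Omega_r(x)$ with $\m(|Du|^p)(x_1)\le(6/7)^n N_1^{pk}$ and $\m(|F|^p)(x_1)\le(6/7)^n N_1^{pk}\delta^p$. Then with $\lambda:=N_1^{k}$ the nonemptiness hypothesis of Corollary~\ref{Cor1} is satisfied (the point $x_1$ lies in the intersection of the two sublevel sets), so Corollary~\ref{Cor1} gives
$$
\left|\left\{y\in\Omega_r(x):\m(|Du|^p)(y)>\left(\tfrac{6}{7}\right)^n N_1^{pk}N_1^p\right\}\right|<\varepsilon|B_r|,
$$
that is $|\mathcal{C}_k\cap B_r(x)|\le|\mathcal{C}_k\cap\Omega_r(x)|<\varepsilon|B_r(x)|$, contradicting the assumption. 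Hence (2) holds. Applying Proposition~\ref{prop1} with $\mathcal{C}=\mathcal{C}_k$, $\mathcal{D}=\mathcal{D}_k$ and bounding $|\mathcal{D}_k|$ by the sum of the measures of its two defining sets, and using $(10/(1-\delta))^n\le 20^n$ for $\delta\le 1/2$, gives exactly the asserted estimate.

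The main obstacle, and the only place requiring genuine care rather than bookkeeping, is the correct matching of constants: one must confirm that the Caccioppoli-type energy estimate $\int_\Omega|Du|^p\le C_4\int_\Omega|F|^p$ delivers precisely the constant $C_4=C_4(n,p,\gamma)$ appearing in \eqref{55}, so that the definition of $k_0$ is consistent with the verification of hypothesis (1); and that the roles of $\lambda$ and the threshold $(6/7)^n$ are tracked through Corollary~\ref{Cor1} without drift. One should also note that $\delta$ and $\sigma$ here are those already fixed from Corollary~\ref{Cor1} for the chosen $\varepsilon$, and that $R/\sigma$ is what appears both in \eqref{55} and in Proposition~\ref{prop1}, so the scales are compatible. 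Everything else — the weak $(1,1)$ inequality, monotonicity in $k$, the trivial set inclusions — is routine.
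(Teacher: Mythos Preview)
Your proposal is correct and follows essentially the same route as the paper: define the sets $\mathcal{C}$ and $\mathcal{D}$, verify hypothesis~(1) of Proposition~\ref{prop1} via the energy estimate $\int_\Omega|Du|^p\le C\int_\Omega|F|^p$ combined with the weak $(1,1)$ maximal inequality and the choice of $k_0$ in~\eqref{55}, and verify hypothesis~(2) by contraposition from Corollary~\ref{Cor1}. The paper is terser on hypothesis~(2) and, for the energy estimate, explicitly splits into the cases $p\ge 2$ (using~\eqref{Mo}) and $1<p<2$ (using Lemma~\ref{Lem1}); you may want to note that Lemma~\ref{Lem1} is what makes the absorption argument go through when $1<p<2$, since~\eqref{3} alone does not directly give $\int_\Omega|Du|^p$ on the left.
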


\begin{proof}
Taking $u$ as a test function for \eqref{1}, we have
\begin{align*}
\int_{\Omega} \langle \ba(x,u,Du)-\ba(x,u,0),Du\rangle\; dx &= \int_{\Omega} \langle \ba(x,u,Du),Du\rangle\; dx\\
&= \int_{\Omega} \langle |F|^{p-2}F,Du\rangle\; dx\\
&\leq \int_{\Omega} |F|^{p-1}|Du|\; dx\\
&\leq \tau\int_{\Omega} |Du|^p\; dx + C(\tau)\int_{\Omega} |F|^p\; dx.
\end{align*}
Lemma~\ref{Lem1} and \eqref{Mo} give
\begin{align*}
\int_\Omega |Du|^p\; dx &\leq C_3\int_{\Omega} \langle \ba(x,u,Du)-\ba(x,u,0),Du\rangle\; dx\\
&\leq C_3\tau\int_{\Omega} |Du|^p\; dx + C(\tau)\int_{\Omega} |F|^p\; dx
\end{align*}
and selecting $\tau=\frac{1}{2C_3},$ we obtain
\begin{align*}
\int_\Omega |Du|^p\; dx  \leq C \int_{\Omega} |F|^p\; dx.
\end{align*}
This estimate and the weak type $(1,1)$-estimate for the maximal function yield
\begin{align*}
\left|\left\{x\in\Omega:\m(|Du|^p)(x)>\left(\frac{6}{7}\right)^n N_1^{p(k+1)}\right\}\right|  \leq &\ \frac{C}{N_1^{p(k+1)}}\int_\Omega |Du|^p\; dx \\
 \leq &\ \frac{C_4}{N_1^{p(k+1)}}\int_\Omega |F|^p\; dx,
\end{align*}
for some positive constant $C_4=C_4(n,p,\gamma).$ Selecting the integer $k_0$ for which \eqref{55} holds, we find that for all $k\geq k_0,$
\begin{align*}
\left|\left\{x\in\Omega:\m(|Du|^p)(x)>\left(\frac{6}{7}\right)^n N_1^{p(k+1)}\right\}\right|  \leq &\ \frac{C_4}{N_1^{p(k+1)}}\int_\Omega |F|^p\; dx \\
 \leq &\ \frac{C_4}{N_1^{p(k_0+1)}}\int_\Omega |F|^p\; dx < \varepsilon|B_{R/\sigma}| .
\end{align*}
We define now
$$
\mathcal{C}=\left\{ x\in\Omega: \m(|Du|^p)(x)> \left(\frac{6}{7}\right)^n N_1^{p(k+1)} \right\}
$$
and
\begin{align*}
\mathcal{D}=&\ \left\{ x\in\Omega:\m(|Du|^p)(x) > \left(\frac{6}{7}\right)^n N_1^{pk}\right \}\\
&\ \cup\left\{x\in\Omega:\m(|F|^p)(x)> \left(\frac{6}{7}\right)^n N_1^{pk}\delta^p\right\}
\end{align*}
in order to apply Proposition~\ref{prop1}. Then the first assumption of Proposition~\ref{prop1} follows directly, while the second one comes from Corollary~\ref{Cor1}. Consequently, we obtain the conclusion of Lemma~\ref{Lem11}.
\end{proof}

With all these tools in hand, we are in a position now to prove  Theorem~\ref{Thm1}.

\begin{proof}[Proof of Theorem~\ref{Thm1}]
Straightforward calculations yield
\begin{align*}
\int_\Omega \m(|Du|^p)^q\ dx =&\ q\int_0^\infty t^{q-1}|\{x\in\Omega:\m(|Du|^p)(x)>t\}|\ dt\\
\leq&\ q\int_0^{\left(\frac{6}{7}\right)^n N_1^{pk_0}} t^{q-1}|\{x\in\Omega:\m(|Du|^p)(x)>t\}|\ dt\\
&\  +q\sum_{k=k_0}^\infty\int_{\left(\frac{6}{7}\right)^n N_1^{pk}}^{\left(\frac{6}{7}\right)^n N_1^{p(k+1)}} t^{q-1}|\{x\in\Omega:\m(|Du|^p)(x)>t\}|\ dt \\
\leq&\ \left(\frac{6}{7}\right)^{nq} N_1^{pqk_0}|\Omega|\\
&\  +\left(\frac{6}{7}\right)^{nq}(N_1^{pq}-1)\sum_{k=k_0}^\infty N_1^{pqk}\left|\left\{x\in\Omega:\m(|Du|^p)(x)>\left(\frac{6}{7}\right)^n N_1^{pk}\right\}\right|,
\end{align*}
where $N_1$ is given in Lemma \ref{Lem10} and $k_0$ is given in Lemma \ref{Lem11}.

Keeping in mind \eqref{55}, we have
\begin{equation}
\label{M1}
N_1^{pqk_0} \leq \left(\frac{C}{\varepsilon|B_{R/\sigma}|}\int_\Omega |F|^p\;dx\right)^q \leq C \int_\Omega |F|^{pq}\;dx.
\end{equation}
Further on, Lemma~\ref{Lem11} yields
\begin{align*}
\sum_{k=k_0}^\infty N_1^{pqk}\bigg|\bigg\{&x\in\Omega:\m(|Du|^p)(x)>\left(\frac{6}{7}\right)^n N_1^{pk}\bigg\}\bigg|\\
\leq &\ \sum_{k=k_0}^\infty N_1^{pqk}(20^n\varepsilon)^{k-k_0}\bigg|\bigg\{x\in\Omega: \m(|Du|^p)(x)>\left(\frac{6}{7}\right)^n N_1^{pk_0}\bigg\}\bigg|\\
&\ +\sum_{k=k_0}^\infty\sum_{l=k_0}^k N_1^{pqk}(20^n\varepsilon)^{k-l}\bigg|\bigg\{x\in\Omega:\m(|F|^p)(x)>\left(\frac{6}{7}\right)^n N_1^{pl}\delta^p\bigg\}\bigg|\\
= &\ \sum_{k=k_0}^\infty N_1^{pqk}(20^n\varepsilon)^{k-k_0}\bigg|\bigg\{x\in\Omega: \m(|Du|^p)(x)>\left(\frac{6}{7}\right)^n N_1^{pk_0}\bigg\}\bigg|\\
&\ +\sum_{l=k_0}^\infty\sum_{k=l}^{\infty} N_1^{pqk}(20^n\varepsilon)^{k-l}\bigg|\bigg\{x\in\Omega:\m(|F|^p)(x)>\left(\frac{6}{7}\right)^n N_1^{pl}\delta^p\bigg\}\bigg|\\
=&: S_1+S_2.
\end{align*}

We take now $\varepsilon>0$  small enough to have $0 < 20^nN_1^{pq}\varepsilon<\frac{1}{2},$ and observe that \eqref{M1} gives
\begin{align*}
S_1=&\ \sum_{k=k_0}^\infty N_1^{pqk}(20^n\varepsilon)^{k-k_0}\bigg|\bigg\{x\in\Omega: \m(|Du|^p)(x)>\left(\frac{6}{7}\right)^n N_1^{pk_0}\bigg\}\bigg|\\
\leq&\ 2 N_1^{pqk_0}|\Omega| \leq 2|\Omega|\left(\frac{C}{\varepsilon|B_{R/\sigma}|}\int_\Omega |F|^p\;dx\right)^q \leq C \int_\Omega |F|^{pq}\;dx.
\end{align*}
On the other hand,
\begin{align*}
S_2=&\ \sum_{l=k_0}^\infty\sum_{k=l}^{\infty} N_1^{pqk}(20^n\varepsilon)^{k-l}\bigg|\bigg\{x\in\Omega:\m(|F|^p)(x)> \left(\frac{6}{7}\right)^n N_1^{pl}\delta^p\bigg\}\bigg|\\
\leq&\  2 \sum_{l=k_0}^\infty N_1^{pql}\bigg|\bigg\{x\in\Omega:\m(|F|^p)(x)>\left(\frac{6}{7}\right)^n N_1^{pl}\delta^p\bigg\}\bigg|\\
\leq&\ C\int_\Omega \m(|F|^p)^q(x)\ dx.
\end{align*}
Therefore, we conclude that
$$
\int_\Omega \m(|Du|^p)^q(x)\ dx\leq C\int_\Omega |F|^{pq}(x)\ dx + C\int_\Omega \m(|F|^p)^q(x)\ dx.
$$
At this point, applying the strong type $(pq,pq)$-estimate for the maximal function, we complete the proof of Theorem~\ref{Thm1}.
\end{proof}

\section{Refinements of the gradient estimates}\label{Sec6}

It turns out that, for values of the exponent $p$ greater than or equal to the space dimension $n,$ the result of Theorem~\ref{Thm1} continues to hold under weaker assumption on the $z$-behaviour of $\ba(x,z,\xi)$ than the H\"older continuity \eqref{4}.

Precisely, assume hereafter that $p\geq n$ and
\begin{equation}\label{4'}
|\ba(x,z_1,\xi)-\ba(x,z_2,\xi)|\leq \omega(|z_1-z_2|)\xi|^{p-1}
\end{equation}
for a.a. $x\in\Omega,$ $\forall z_1,z_2\in\mr$ and $\forall\xi\in\mr^n,$ where $\omega\colon \mathbb{R}^+\to\mathbb{R}^+$ is a modulus of continuity, that is, $\lim_{\tau\to0^+}\omega(\tau)=0.$

We will make use of our results from \cite{BPS,BPS-arxiv} where global H\"older continuity is proved for the weak solutions of general quasilinear elliptic equations with Morrey data over domains with \textit{uniformly $p$-thick complements.} Since any Reifenberg flat domain has uniformly $p$-thick complement (cf. \cite{BPS-arxiv}), the above mentioned results hold true in the situation here considered.

Using the assumption \eqref{3} on uniform ellipticity and its outgrowth \eqref{Mo1}, it is not hard to check that
\begin{align}
\label{6.2}
\langle \ba(x,z,\xi)+|F(x)|^{p-2}F(x),\xi\rangle \geq&\ C_1|\xi|^p -C_2\big(|F(x)|^p \big)^{\frac{p}{p-1}},\\
\label{6.3}
\big| \ba(x,z,\xi)-|F(x)|^{p-2}F(x)\big| \leq &\ \gamma^{-1}|\xi|^{p-1} + |F(x)|^{p-1}
\end{align}
for a.a. $x\in\Omega,$ $\forall (z,\xi)\in \mr\times\mr^n,$ and where $C_1$ and $C_2$ are positive constants depending only on $n,$ $p$ and $\gamma.$

Further on, assuming $|F(x)|^{p}\in L^q(\Omega)$ with $q>1$ as did in Theorem~\ref{Thm1}, it follows $|F(x)|^{p-1}\in L^{\frac{pq}{p-1}}(\Omega).$ We have
$$
\frac{pq}{p-1}>\frac{p}{p-1}\geq \frac{n}{p-1}
$$
because of $q>1$ and $p\geq n,$ and therefore, we first choose a number
$$
p'\in \left(\frac{p}{p-1},\frac{pq}{p-1}\right)
$$
and consequently define
$$
\lambda'=n\left(1-p'\frac{p-1}{pq}\right).
$$
It is obvious that $\lambda'\in(0,n)$ and the choice of $p'$ and $\lambda'$ guarantees that the Lebesgue space $L^{\frac{pq}{p-1}}(\Omega)$ is embedded into the Morrey space $L^{p',\lambda'}(\Omega).$ Thus $|F(x)|^{p-1}\in L^{p',\lambda'}(\Omega)$ with
$$
p'>\frac{p}{p-1},\quad (p-1)p'+\lambda'>n,
$$
and the inequalities \eqref{6.2} and \eqref{6.3} ensure the validity of the results from \cite{BPS,BPS-arxiv}. In other words, the weak solution of the Dirichlet problem \eqref{1} is H\"older continuous function up to the boundary and
\begin{equation}\label{6.4}
\sup_{\overline{\Omega}}|u(x)|+ \sup_{x,y\in \overline\Omega,\ x\neq y} \frac{|u(x)-u(y)|}{|x-y|^\alpha} \leq H
\end{equation}
with $\alpha\in(0,1)$ and $H$ depending on the data of \eqref{1} and on $\|Du\|_{L^p(\Omega)}.$

Let us note at this step that the above arguments are relevant only if $p=n,$ because \eqref{6.4} is direct consequence of the Sobolev embeddings and the Morrey lemma when $p>n.$ Moreover, in case of \textit{Lipschitz continuous} domain $\Omega,$ \eqref{6.4} follows from the classical results of Ladyzhenskaya and Ural'tseva \cite[Chapter~IV]{LU}.

For a fixed weak solution $u\in W^{1,p}_0(\Omega)$ of \eqref{1}, we define now the Carath\'eodory map
$$
\mathbf{A}(x,\xi):= \ba (x,u(x),\xi)
$$
and use \eqref{5}, \eqref{4'} and \eqref{6.4} to infer, through \cite[Lemma~1]{PRS}, that it obeys the $(\delta,R)$-vanishing property
(see \cite[Definition~2.2]{BR}). Moreover,
\eqref{3} implies uniform ellipticity of $\mathbf{A}(x,\xi)$ and $u\in W^{1,p}_0(\Omega)$ solves
$$
\begin{cases}
\mathrm{div\,}\mathbf{A}(x,Du)=\mathrm{div\,}(|F|^{p-2}F) & \textrm{in}\ \Omega\\
u=0 & \textrm{on}\ \partial\Omega.
\end{cases}
$$

This way, \cite[Theorem~2.6]{BR} yields the following refinement of Theorem~\ref{Thm1} in the case $p\geq n$ where the H\"older continuity \eqref{4} of $\ba(x,z,\xi)$ with respect to $z$ is relaxed to only continuity. The constant $H$ below is the one appearing in \eqref{6.4}.

\begin{theorem}
Suppose \eqref{3} and \eqref{4'}, and let $u\in W^{1,p}_0(\Omega)$ be a weak solution of \eqref{1} with $p\geq n.$ Let $|F|^p\in L^q(\Omega)$ for some $q\in (1,\infty).$ Then there is a small constant $\delta=\delta(\gamma,n,p,q,\omega(\cdot),H)$ such that if $\ba$ is $(\delta,R)$-vanishing and $\Omega$ is $(\delta,R)$-Reifenberg flat, then $|Du|^p\in L^q(\Omega)$ with the estimate
$$
\int_\Omega |Du|^{pq} dx \leq C \int_\Omega |F|^{pq} dx,
$$
where $C=C(n,p,q,\gamma,\omega(\cdot),H,|\Omega|).$
\end{theorem}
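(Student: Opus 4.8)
\emph{Strategy.} The plan is to freeze the second slot of $\ba$ at the solution $u$ and reduce the statement to the Calder\'on--Zygmund estimate of \cite[Theorem~2.6]{BR} for equations $\mathrm{div\,}\mathbf A(x,Du)=\mathrm{div\,}(|F|^{p-2}F)$ whose nonlinearity depends only on $x$ and $Du$. Putting $\mathbf A(x,\xi):=\ba(x,u(x),\xi)$, the function $u$ solves $\mathrm{div\,}\mathbf A(x,Du)=\mathrm{div\,}(|F|^{p-2}F)$ trivially, and $\mathbf A$ inherits uniform ellipticity from \eqref{3} with the same constant $\gamma$; the real point is to check that $\mathbf A$ is small-BMO in $x$, i.e.\ $(\delta,R)$-vanishing in the sense of \cite[Definition~2.2]{BR}. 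The hypothesis $p\ge n$ enters precisely here: it forces $u$ to be globally continuous with a quantitative modulus, which upgrades the mere $z$-continuity \eqref{4'} of $\ba$ to the smallness required for $\mathbf A$.

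\emph{Step 1: global H\"older continuity of $u$.} Testing \eqref{1} with $u$ and using Lemma~\ref{Lem1} together with the monotonicity \eqref{Mo1} (exactly as in the proof of Lemma~\ref{Lem11}) yields $\int_\Omega|Du|^p\,dx\le C\int_\Omega|F|^p\,dx$, so $\|Du\|_{L^p(\Omega)}$ is controlled by the data. Then, as spelled out in the discussion preceding the theorem, one rewrites \eqref{1} as $\mathrm{div\,}\big(\ba(x,u,Du)-|F|^{p-2}F\big)=0$, invokes the structure bounds \eqref{6.2}--\eqref{6.3}, the membership $|F|^{p-1}\in L^{p',\lambda'}(\Omega)$ for the chosen exponents with $p'>p/(p-1)$ and $(p-1)p'+\lambda'>n$, and the fact (see \cite{BPS-arxiv}) that Reifenberg flat domains have uniformly $p$-thick complements, and applies the global H\"older regularity of \cite{BPS,BPS-arxiv}. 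This delivers the a priori bound \eqref{6.4}: $\|u\|_{L^\infty(\Omega)}\le H$ and $|u(x)-u(x')|\le H|x-x'|^{\alpha}$ on $\overline\Omega$ for some $\alpha\in(0,1)$, with $H$ depending only on the data and on $\|Du\|_{L^p(\Omega)}$. (For $p>n$ this is just the Sobolev embedding and Morrey's lemma; only the borderline case $p=n$ really uses \cite{BPS,BPS-arxiv}.)

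\emph{Step 2: $\mathbf A$ is uniformly elliptic and $(\delta,R)$-vanishing.} Uniform ellipticity of $\mathbf A(x,\xi)=\ba(x,u(x),\xi)$ is immediate from \eqref{3}. For the vanishing property, fix a ball $B_\rho(y)$ with $0<\rho\le R$ and decompose $\mathbf A(x,\xi)-\overline{\mathbf A}_{B_\rho(y)}(\xi)=T_1+T_2+T_3$, where $T_1:=\ba(x,u(x),\xi)-\ba(x,\overline{u}_{\Omega_\rho(y)},\xi)$, $T_2:=\ba(x,\overline{u}_{\Omega_\rho(y)},\xi)-\overline{\ba}_{B_\rho(y)}(\overline{u}_{\Omega_\rho(y)},\xi)$ and $T_3:=\overline{\ba}_{B_\rho(y)}(\overline{u}_{\Omega_\rho(y)},\xi)-\overline{\mathbf A}_{B_\rho(y)}(\xi)$. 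By \eqref{4'} and \eqref{6.4} (and assuming $\omega$ nondecreasing, as we may), $|T_1|+|T_3|\le 2\,\omega\big(\osc_{\Omega_\rho(y)}u\big)|\xi|^{p-1}\le 2\,\omega(2HR^\alpha)|\xi|^{p-1}$. For $T_2$, dividing by $|\xi|^{p-1}$, taking the supremum over $\xi\ne 0$ and averaging in $x$ over $B_\rho(y)$, the $(\delta,R)$-vanishing \eqref{5} of $\ba$ applies on the compact range $[-H,H]$, since $|\overline{u}_{\Omega_\rho(y)}|\le H$ by \eqref{6.4}, giving $\Xint-_{B_\rho(y)}\Theta\big(\ba;B_\rho(y)\big)(x,\overline{u}_{\Omega_\rho(y)})\,dx\le\delta$. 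Altogether $\mathbf A$ is $\big(\delta+2\,\omega(2HR^\alpha),R\big)$-vanishing; this is precisely the device of \cite[Lemma~1]{PRS}.

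\emph{Step 3: conclusion, and the main obstacle.} Let $\delta_0=\delta_0(\gamma,n,p,q)>0$ be the smallness threshold furnished by \cite[Theorem~2.6]{BR} for a nonlinearity of ellipticity constant $\gamma$ (it also prescribes a threshold on the Reifenberg constant). Choosing first $R$ so small that $2\,\omega(2HR^\alpha)\le\delta_0/2$, then $\delta\le\delta_0/2$ and below the Reifenberg threshold, Step~2 makes $\mathbf A$ $(\delta_0,R)$-vanishing and $\Omega$ suitably Reifenberg flat, so \cite[Theorem~2.6]{BR} applies to $u$ and yields $|Du|^p\in L^q(\Omega)$ with $\int_\Omega|Du|^{pq}\,dx\le C\int_\Omega|F|^{pq}\,dx$ and $C=C(n,p,q,\gamma,\omega(\cdot),H,|\Omega|)$, which is the assertion. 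The main obstacle is Step~2: one must see that \emph{continuity} of $\ba$ in $z$ — rather than Lipschitz or H\"older continuity — suffices to transfer the small-BMO property to $\mathbf A$, and this is possible only because, by Step~1, the solution itself is globally continuous with a controlled modulus, so that $\omega(\osc_{B_\rho}u)$ can be made arbitrarily small by shrinking $\rho$; this is exactly where $p\ge n$ is used. No circularity arises in the constants: $\delta_0$ depends on $\ba$ only through $\gamma$, while $H$ (and hence the admissible $R$) enters only the final constant $C$ and the choice of $R$.
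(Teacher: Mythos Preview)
Your proof is correct and follows exactly the route taken in the paper: freeze $\ba$ at $u$ to define $\mathbf A(x,\xi)=\ba(x,u(x),\xi)$, use the global H\"older continuity \eqref{6.4} furnished by \cite{BPS,BPS-arxiv} (valid since Reifenberg flat domains have uniformly $p$-thick complements and $|F|^{p-1}$ lies in a suitable Morrey space), combine this with \eqref{4'} and \eqref{5} to get the $(\delta,R)$-vanishing of $\mathbf A$, and then invoke \cite[Theorem~2.6]{BR}. Your Step~2 simply spells out the splitting behind \cite[Lemma~1]{PRS} that the paper cites without detail, so the arguments coincide.
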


\end{document}